\documentclass[11pt]{article}
\usepackage[utf8]{inputenc}
\usepackage[T1]{fontenc}
\usepackage{authblk}  
\usepackage{hyperref}  
\usepackage[margin=1.3in]{geometry}  
\usepackage{enumitem}  
\usepackage{graphicx}  
\usepackage{xcolor}  
\usepackage{xspace}  
\usepackage{amssymb, amsfonts, amsmath, amsthm}  
\usepackage{thmtools, thm-restate}  
\usepackage{old-arrows}
\usepackage{marginnote}  

\definecolor{ured}{RGB}{239, 118, 122}  
\definecolor{ugrey}{RGB}{69, 105, 144}  
\definecolor{ugreen}{RGB}{73, 220, 177}  
\definecolor{uyellow}{RGB}{238, 184, 104}  
\definecolor{ublue}{RGB}{0, 180, 216}  

\hypersetup{
	colorlinks=true,
	citecolor=ugreen,
	linkcolor=ugreen
}

\newcommand{\st}{:}  
\newcommand{\U}{X}  
\newcommand{\card}[1]{|#1|}  
\newcommand{\leftsize}[1]{||#1||_\ell}  
\newcommand{\rightsize}[1]{||#1||_r}  
\newcommand{\totalsize}[1]{||#1||}  
\newcommand{\pow}[1]{\mathcal{P}(#1)}  
\newcommand{\cl}{\phi}  
\newcommand{\cs}{\mathcal{C}}  
\newcommand{\imp}{\rightarrow}  
\newcommand{\ib}{\Sigma}  
\renewcommand{\H}{\mathcal{H}}  
\renewcommand{\emptyset}{\varnothing}  
\DeclareMathOperator{\ex}{ex}  

\newcommand{\HQC}{\mathcal{Q}}  
\newcommand{\lset}[1]{\mathit{#1}} 

\newcommand{\NP}{\mathsf{NP}}  

\theoremstyle{plain}  
\declaretheorem[name=Theorem]{theorem}
\declaretheorem[name=Lemma]{lemma}
\declaretheorem[name=Proposition]{proposition}

\theoremstyle{definition}  

\declaretheorem[name=Example]{example}

\newif\iflongversion
\longversiontrue
\newcommand{\iflongelse}[2]{\iflongversion{#1}\else{#2}\fi}

\title{Characterizing the optimum bases of a convex geometry using quasi-closed hypergraphs} 
\author[1]{Anthony Meunier}
\author[1]{Lhouari Nourine}
\author[2]{Simon Vilmin}

\affil[1]{Université Clermont-Auvergne, CNRS, Clermont-Auvergne-INP, LIMOS, Clermont-Ferrand, France}
\affil[2]{Aix-Marseille Université, CNRS, LIS, Marseille, France.}

\begin{document}
\maketitle

\begin{abstract}
Optimizing an implicational base of a closure system consists in turning this implicational base into an equivalent one with premises and conclusions as small as possible.
This task is known to be hard in general but tractable for a number of classes of closure systems.
In particular, several classes of convex geometries are known to have tractable optimization, while the problem was recently claimed to remain hard in general convex geometries.
Continuing this line of research, we give a characterization of the optimum bases of a convex geometry in terms of what we call quasi-closed hypergraphs.
We then use this characterization to show that when each quasi-closed hypergraph has disjoint edges, any implicational base of the convex geometry can be optimized in polynomial time with existing minimization and reduction algorithms.
Finally, we prove that this property applies to double-shelling, acyclic, affine and acceptant convex geometries, thus unifying the existing results regarding the tractability of optimization for the first three classes.
\vspace{0.5em}

\textbf{Keywords:} closure systems, implicational bases, optimum bases, convex geometries.
\end{abstract}

\section{Introduction} \label{sec:introduction}

Implications, also known as attribute implications in Formal Concept Analysis~\cite{ganter1999formal}, functional dependencies in database theory~\cite{maier1983theory,mannila1992design}, Horn clauses in logic~\cite{boros2009subclass,hammer1995quasi}, or join-covers in lattice theory~\cite{birkhoff1940lattice} to cite a few, are a common representation of finite closure systems.
Indeed, a closure system can always be represented by several (equivalent) implicational bases---i.e., collections of implications.
Within the study of implicational bases, the task of turning an implicational base into a smaller one has been the topic of numerous research.
Several measures of the size of an implicational base have been defined and studied: the number of implications, the sum of sizes of the premises of its implications, of the conclusions, or both.
An implicational base that minimizes the number of implications is \emph{minimum}.
It is \emph{left-optimum} (resp. \emph{right-optimum}) if the sum of sizes of its premises (resp. conclusions) is as small as possible.
It is \emph{optimum} if the sum of the sizes of all its premises and conclusions is minimum among all other equivalent bases.
We refer to Section~\ref{sec:preliminaries} for formal definitions.

In this paper we are interested in the \emph{optimization} problem: given an implicational base of a closure system, find, or turn the implicational base into, an equivalent optimum implicational base.
This problem has been studied from different perspectives such as database theory~\cite{maier1980minimum,ausiello1986minimum,maier1980minimum}, propositional logic~\cite{hammer1993optimal,hammer1995quasi,boros2009subclass,boros2010exclusive,boros2013decomposition,kuvcera2017hydras}, or closure systems and lattices~\cite{wild1994theory,wild2000optimal,nakamura2013prime,adaricheva2014implicational,adaricheva2017optimum,bichoupan2022complexity}.
We review below the main results on optimization and redirect the reader to~\cite[section 6.7]{crama2011boolean} and \cite{wild2017joy,boros2013decomposition} for further references on the topic.
The first result is due to Maier~\cite{maier1980minimum} in his work on covers of functional dependencies.
In that paper, he proves that the decision problem associated to optimization is $\NP$-complete for arbitrary closure systems and that optimum bases must be minimum.
Mannila and Raïha~\cite{mannila1983relationship} then investigate further optimum bases and prove that an optimum implicational base must be left-optimum and that an implicational base is optimum if and only if each of its equivalence class is optimum.
The equivalence classes of an implicational base are the sets of implications whose premises share the same closure. 
This latter result was recently reproved for right-optimization by Bichoupan~\cite[unpublished]{bichoupan2022complexity}, answering an open question of Adaricheva and Nation~\cite{adaricheva2014implicational}, albeit with a different phrasing: the (right-)size of an equivalence class in any optimum implicational base is fixed.
Ausiello et al.~\cite{ausiello1986minimum} (see also \cite{ausiello2017directed}), prove that both left- and right-optimization are $\NP$-complete, which gives a new proof of the hardness of the optimization task.
They also complete the connection between the different minimality measures: an optimum base is both left- and right-optimum.
It thus follows that an implicational base is optimum if and only if it is both left- and right-optimum (see also \cite{adaricheva2014implicational}).
In the language of closure systems and implicational bases, Wild~\cite{wild1994theory,wild2000optimal} shows that both modular closure systems and simple binary matroids can be optimized in polynomial time.
More precisely, for simple binary matroids, he shows that the corresponding closure system does have a unique optimum base that can be found in polynomial time, a result recently rediscovered by Berczi et al.~\cite{berczi2024matroid}.
In~\cite{hammer1993optimal}, Hammer and Kogan give a new proof that optimization is $\NP$-complete, this time in the language of Horn CNFs.
Later~\cite{hammer1995quasi}, they proved that optimization could be achieved in polynomial time for acyclic and quasi-acyclic Horn functions.
In particular, they prove that each acyclic Horn function also has a unique optimum implicational base. 
In \cite{boros2009subclass} Boros et al.\! generalize the results of Hammer and Kogan and show that optimization is tractable for the more general classes of component quadratic Horn functions based on earlier results regarding essential and exclusive sets of implicates~\cite{boros2010exclusive} they used to give new proofs that quadratic, quasi-acyclic and acyclic Horn functions can be optimized.
Then, in another contribution~\cite{boros2013decomposition}, Boros et al.\ proved that optimization remains hard even for implicational bases with premises of size $2$.
This result was later strengthened by Ku{\v{c}}era~\cite{kuvcera2017hydras}, who proves that optimization remains hard even in implicational bases where all premises have size $2$ and the same closure, the ground set.
Besides, in the language of closure systems, Adaricheva and Nation~\cite{adaricheva2014implicational} proved that optimization and right-optimization remain hard even within the class of bounded closure systems.

Convex geometries are yet another classes of closure systems that has recently received attention regarding optimization~\cite{nakamura2013prime,adaricheva2017optimum,bichoupan2022complexity}.
Convex geometries~\cite{edelman1985theory}, closure systems with anti-exchange closure operator, have been studied and rediscovered in many fields ranging from combinatorial optimization to social choice theory (see \cite{monjardet1985use,koshevoy1999choice}).
For optimization, Kashiwabara and Nakamura~\cite{nakamura2013prime} proved that affine convex geometries, that is convex geometries arising from point configurations in $\mathbb{R}^d$ can be optimized in polynomial time.
This result, as well as the result of Hammer and Kogan~\cite{hammer1995quasi} on acyclic Horn functions, also known as acyclic convex geometries, poset-type convex geometries or $G$-geometries (see, e.g.,~\cite{hammer1995quasi,wild1994theory,adaricheva2017optimum,defrain2021translating}), has then been generalized by Adaricheva~\cite{adaricheva2017optimum}.
In that contribution, the author prove that convex geometries with the Carousel property, $D$-geometries and double-shelling convex geometries do have tractable optimization.
Convex geometries with the Carousel property generalize affine convex geometries.
$D$-geometries are precisely the intersection of lower-bounded and lower-semimodular closure systems, and properly contain acyclic convex geometries.
Double-shelling convex geometries, also well-studied (see, e.g., \cite{edelman1985theory,adaricheva2017optimum,kashiwabara2010characterizations}), are the convex geometries that arise from taking the convex subsets of posets.
Adaricheva leaves open the question of whether convex geometries can be optimized in polynomial time.
Bichoupan~\cite[unpublished]{bichoupan2022complexity} recently answered negatively this question, proving that convex geometries are hard to optimize. 
Understanding what makes optimization tractable or not in convex geometries thus constitutes an interesting research question.

Our contribution more precisely follows this line.
By associating to each essential set $C$ (see Section~\ref{sec:preliminaries} for definitions) of a given convex geometry $(\U, \cs)$ a hypergraph $\HQC(C)$, which we call the \emph{quasi-closed hypergraph} of $C$, whose edges are the complement of the inclusion-wise maximal quasi-closed sets of $C$, we first propose a characterization of all the (left-)optimum bases of the convex geometry:
\begin{theorem}[restate=THMcgoptimib, label=thm:cg-optim-ib]
Let $(\U, \cs)$ be a convex geometry.
A pair $(\U, \ib)$ where $\ib$ is a collection of valid implications of $(\U, \cs)$ is a left-optimum (resp. optimum) implicational base of $(\U, \cs)$ if and only if $\ib$ is obtained by choosing, for each essential set $C$, exactly one implication of the form $\ex(C) \imp T$ where $T$ is a hitting set (resp.\! minimum hitting set) of $\HQC(C)$.
\end{theorem}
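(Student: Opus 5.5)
We plan to combine the classical structure theory of optimum bases with the anti-exchange property. Fix any implicational base $\ib$ of $(\U,\cs)$, with closure operator $\cl$. For each essential set $C$ (a closed set that is the closure of a quasi-closed, non-closed set), the equivalence class $\ib_C$ consists of the implications $A \imp B$ with $\cl(A) = C$. By Mannila and Raïha, $\ib$ is optimum (resp.\ left-optimum) if and only if each class $\ib_C$ is optimum (resp.\ left-optimum) among all classes that can replace it while keeping an equivalent base. Moreover, an optimum base is minimum, so every class has exactly the number of implications of the Duquenne–Guigues class of $C$, namely the number of pseudo-closed sets with closure $C$.

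\textbf{Step 1: reduction to a single implication per class.} In a convex geometry, $\ex(C)$ is contained in every generator of $C$, and it is itself a generator. Hence there is exactly one pseudo-closed set with closure $C$, the smallest quasi-closed generator. We will check that this set is $\ex(C)$, or at least that premises $\ex(C)$ are forced. This uses the anti-exchange property: for $x\in\ex(C)$ we have $x\notin\cl(C\setminus\{x\})$. So a minimum base has exactly one implication $A\imp B$ per essential set. Its premise satisfies $A\supseteq\ex(C)$, since $\cl(A)=C$ and every generator contains the extreme points. Since $\cl(\ex(C))=C$, replacing $A$ by $\ex(C)$ preserves equivalence; the only subtlety is not breaking the closures of sets below, which we handle as in Step 2. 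Thus left-optimum bases are exactly those with premises $\ex(C)$, one per essential set.

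\textbf{Step 2: characterizing admissible conclusions.} Fix such a base, and consider the single implication $\ex(C)\imp T$ attached to $C$, with the other classes fixed. We need to show that the base is equivalent to $(\U,\cs)$ if and only if $T$ meets the complement of every maximal quasi-closed subset of $C$. The sets that the other classes alone cannot separate from $C$ are exactly the quasi-closed sets $Q$ of $C$: those with $\cl(Q)=C$ that are closed under all implications whose premise closure is strictly contained in $C$. The implication $\ex(C)\imp T$ must leave none of them closed. Any such $Q$ contains $\ex(C)$, so the implication fires on it, and $Q$ is destroyed iff $T\not\subseteq Q$, i.e.\ $T\cap(C\setminus Q)\neq\emptyset$. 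It suffices to check this for the maximal $Q$, whose complements are the edges of $\HQC(C)$. Hence validity of the choice holds iff $T$ is a hitting set of $\HQC(C)$.

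Conversely, implications belonging to other classes $C'\neq C$ never fire usefully on these sets, because of the one-implication-per-class structure and the closure comparison. So the conditions for different $C$ are independent. Optimality then reduces to minimizing each $|T|$ independently, which gives minimum hitting sets. This independence, for both validity and size, is what makes the ``if and only if'' hold.

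\textbf{Main obstacle.} The delicate point is the independence claim in Step 2: showing that the set of quasi-closed sets of $C$ does not depend on the choice of conclusions in the other classes. For this we will rely on the fact that quasi-closedness with respect to lower classes is intrinsic, i.e.\ determined by the closure system alone. Establishing this requires an induction on essential sets ordered by inclusion, proving at each stage that the lower part of any equivalent base closes exactly the same sets below $C$.
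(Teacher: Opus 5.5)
Your plan is correct and essentially follows the paper's proof. Premises are pinned to $\ex(C)$ by the uniqueness of the pseudo-closed set of each essential set together with Theorem~\ref{thm:classic-pseudo}; for the left-optimum case you should also cite that left-optimum bases are minimum (item (1) of Theorem~\ref{thm:optim-classes}). Your ``main obstacle'' induction is exactly the minimal-counterexample argument of Proposition~\ref{prop:carac-valid-qc}: a $\ib$-closed non-closed set whose closure is inclusion-minimal must be quasi-closed, and is therefore broken by $\ex(C) \imp T$.

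One harmless slip: the pseudo-closed set of $C$ is $\sigma(\ex(C))$, which can strictly contain $\ex(C)$. For instance, with $\ib = \{a \imp b, \lset{ac} \imp d\}$ it is $\lset{abc}$, while $\ex(\lset{abcd}) = \lset{ac}$. Nothing in your argument depends on this. Also, replacing a premise $A \supseteq \ex(C)$ by $\ex(C)$ yields a valid, stronger implication, so equivalence is preserved with no ``subtlety below $C$'' to handle.
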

We then focus on the case where all the quasi-closed hypergraphs have pairwise disjoint edges.
In this case, minimal and minimum hitting sets of these hypergraphs coincide.
Therefore, based on Theorem~\ref{thm:cg-optim-ib}, any greedy reduction of the conclusions of a left-optimum implicational base will produce an optimum implicational base.
Since in convex geometries each closed set has a unique minimal spanning set, left-optimality is easily reached by a greedy reduction of the premises.
Hence, any polynomial time algorithms such as in \cite{maier1980minimum,shock1986computing} that minimize and reduce an implicational base will in this case produce an optimum base, which yields our second theorem:
\begin{theorem}[restate=THMqcdisjoint, label=thm:qc-disjoint]
An implicational base of a convex geometry where each quasi-closed hypergraph has pairwise disjoint edges can be optimized in polynomial time.
\end{theorem}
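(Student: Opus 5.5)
The plan is to reduce Theorem~\ref{thm:qc-disjoint} to Theorem~\ref{thm:cg-optim-ib} in two polynomial-time phases: first make the base left-optimum, then shrink the conclusions greedily.

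\emph{Phase 1: left-optimization.} Given an implicational base $\ib$ of a convex geometry $(\U, \cs)$, I will use the classical minimization and reduction procedures of Maier~\cite{maier1980minimum} or Shock~\cite{shock1986computing}. These run in polynomial time using forward chaining for closures. They shrink each premise $A$ to a minimal subset $A'$ with the same closure, merge the implications whose premises have equal closure into equivalence classes, and remove redundant classes. The key fact is that in a convex geometry every closed set $\cl(A)$ has a unique minimal spanning set, namely its set of extreme points $\ex(\cl(A))$. So greedy premise reduction always ends at $\ex(C)$, where $C = \cl(A)$, whatever order is used. Minimization then leaves exactly one implication per essential set $C$, of the form $\ex(C) \imp T_C$. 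I must check here that the classes surviving minimization are exactly those of the essential sets. This should follow from the preliminaries, where essential sets are the closures of premises that every base needs. Granting this, and since $\ib$ stays equivalent, Theorem~\ref{thm:cg-optim-ib} applies: each $T_C$ is a hitting set of $\HQC(C)$, and the base is left-optimum.

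\emph{Phase 2: conclusion reduction.} For each implication $\ex(C) \imp T_C$ and each $x \in T_C$ in turn, I test whether removing $x$ keeps the base equivalent to the original one. That is, I test whether the closure of $\ex(C)$ under the modified base still contains $x$ (together with the rest of the old right side, which amounts to checking that the removed element is still derived). Each test is one forward-chaining computation, so the phase is polynomial. The premises are unchanged, so at every step the base keeps one implication of the form $\ex(C) \imp T$ per essential set. By Theorem~\ref{thm:cg-optim-ib}, an element $x$ can be dropped exactly when $T_C \setminus \{x\}$ still hits $\HQC(C)$. The subtle point is that deleting from $T_C$ never affects the validity of the other classes' choices, because the characterization in Theorem~\ref{thm:cg-optim-ib} is class-by-class. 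Hence the greedy procedure ends with each $T_C$ a minimal hitting set of $\HQC(C)$.

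\emph{Conclusion.} By hypothesis, the edges of $\HQC(C)$ are pairwise disjoint. A minimal hitting set of such a hypergraph contains exactly one vertex from each edge, since a second vertex in some edge could be removed. So every minimal hitting set has size equal to the number of edges, which makes it minimum. The optimum version of Theorem~\ref{thm:cg-optim-ib} then shows the resulting base is optimum.

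The main obstacle is justifying the equivalence used in Phase 2: that a greedy removal test done on the whole base (equivalence with the original) coincides with the local condition ``$T_C \setminus \{x\}$ hits $\HQC(C)$.'' I would derive this directly from the ``if and only if'' in Theorem~\ref{thm:cg-optim-ib}. The modified collection has premises $\ex(C)$ for each essential set, so it is a base exactly when every conclusion is a hitting set. The valid implications needed by that theorem are automatic, since $T \subseteq T_C \subseteq C$.
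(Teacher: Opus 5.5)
Your proposal is correct and follows essentially the same route as the paper's proof. You minimize and left-reduce to get one implication $\ex(C) \imp T_C$ per essential set, right-reduce greedily so that each $T_C$ becomes a minimal hitting set of $\HQC(C)$ by Theorem~\ref{thm:cg-optim-ib}, and use disjointness of the edges to conclude that minimal equals minimum. The step you flagged, one surviving implication per essential set, is what the paper also takes for granted; it follows from item~(1) of Theorem~\ref{thm:classic-pseudo} together with the fact that each essential set of a convex geometry has a unique pseudo-closed set.
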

Finally, we prove that Theorem~\ref{thm:qc-disjoint} applies to four different classes of convex geometries:

\begin{theorem}[restate=THMcgclasses, label=thm:cg-classes]
Each quasi-closed hypergraph of a convex geometry has pairwise disjoint edges whenever the convex geometry belongs to:
\begin{enumerate}[label=(\arabic*)]
    \item double-shelling convex geometries;
    \item acyclic convex geometries;
    \item affine convex geometries; or
    \item acceptant convex geometries.
\end{enumerate}
Consequently, an implicational base of a convex geometry in any of these classes can be optimized in polynomial time.
\end{theorem}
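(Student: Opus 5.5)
The plan is to reduce the statement to a single combinatorial property and then check that property class by class. The second sentence, ``Consequently, \dots'', is then immediate from Theorem~\ref{thm:qc-disjoint}.

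\textbf{Reformulation.} Fix an essential set $C$ and write $\cl$ for the closure operator. The edges of $\HQC(C)$ are the sets $C \setminus Q$, where $Q$ ranges over the inclusion-wise maximal quasi-closed sets with $\cl(Q) = C$, $Q \neq C$. Two such edges are disjoint if and only if the corresponding maximal sets satisfy $Q_1 \cup Q_2 = C$. So it suffices to prove the following claim:
\begin{quote}
if $Q_1, Q_2$ are quasi-closed sets spanning $C$ and some $x \in C$ lies outside both, then $Q_1 \cup Q_2$ is again a proper quasi-closed set spanning $C$.
\end{quote}
Indeed, maximality then forces $Q_1 = Q_1 \cup Q_2 = Q_2$. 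Both $Q_1$ and $Q_2$ contain $\ex(C)$, since $\ex(C)$ is the unique minimal spanning set of $C$. Hence $Q_1 \cup Q_2$ spans $C$, and it misses $x$, so it is proper. What remains is quasi-closedness: for every $Y \subseteq Q_1 \cup Q_2$ with $\cl(Y) \neq C$ we need $\cl(Y) \subseteq Q_1 \cup Q_2$. The natural route is to split $Y = Y_1 \cup Y_2$ with $Y_i \subseteq Q_i$. If $\cl(Y) \neq C$, then each $\cl(Y_i) \neq C$, so $\cl(Y_i) \subseteq Q_i$ by quasi-closedness of $Q_i$. The task is to show that $\cl(Y)$ does not exceed $\cl(Y_1) \cup \cl(Y_2)$ by more than what lies in $Q_1 \cup Q_2$. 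In general convex geometries this step fails, and this is exactly where the class-specific structure must be used.

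\textbf{The four classes.}
\begin{itemize}
\item \emph{Acyclic convex geometries.} I expect something stronger: $\HQC(C)$ has a single edge. Take a topological order of the acyclic implication graph and let $x$ be the element of $C \setminus \ex(C)$ that is maximal in this order. I would show that $C \setminus \{x\}$ is quasi-closed. I would also show that any quasi-closed $Q$ spanning $C$ must contain every element of $C \setminus \ex(C)$ other than those forced out together with $x$. The reason is that elements below $x$ are generated from proper subsets with closure different from $C$.
\item \emph{Double-shelling convex geometries.} Here the closed sets are the order-convex sets of a poset $P$. For $Y \subseteq C$, $\cl(Y)$ is the union of the intervals $[a,b]$ with $a, b \in Y$. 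An interval $[a,b]$ with $a \in Y_1$ and $b \in Y_2$ is not contained in $\cl(Y_1)$ or $\cl(Y_2)$. For such a pair I would show directly that either $\cl(\{a,b\} \cup \ex(C)) = C$, giving a contradiction, or $[a,b] \subseteq Q_1 \cup Q_2$, using that $x$ is missed by both $Q_1$ and $Q_2$.
\item \emph{Affine convex geometries.} The same mixed-pair analysis applies, with intervals replaced by convex hulls. By Carath\'eodory, a point of $\cl(Y)$ lies in the hull of a few points of $Y$, so only simplices mixing $Y_1$ and $Y_2$ need to be handled.
\item \emph{Acceptant convex geometries.} I would use their defining property to establish directly the join-type inclusion $\cl(Y_1 \cup Y_2) \subseteq \cl(Y_1) \cup \cl(Y_2) \cup \cdots$ needed above.
\end{itemize}

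\textbf{Main obstacle.} The hard step is the mixed case in the double-shelling and affine geometries: a point generated jointly by elements of $Q_1 \setminus Q_2$ and $Q_2 \setminus Q_1$. I would first try to prove that such a point, together with $\ex(C)$, already spans $C$, so that the offending $Y$ has $\cl(Y) = C$ and the quasi-closedness condition is vacuous for it. If that fails, I would instead characterize the maximal quasi-closed sets explicitly. The expected form is $C \setminus \{z\}$, or $C$ minus a block of a partition of $C \setminus \ex(C)$ determined by the geometry. Disjointness of the edges would then be read off from that description.
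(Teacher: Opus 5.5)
\textbf{There is a genuine gap.} Your reformulation is correct, but for \emph{maximal} $Q_1, Q_2$ the union claim is literally equivalent to disjointness of the edges. So it makes no progress, and the whole content of the theorem sits in the class-specific step, where the proposal either gives no argument or aims at a false statement.

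\textbf{Acyclic case.} Your expectation that $\HQC(C)$ has a single edge is wrong. Take $\ib=\{ab\imp cd\}$ on $\{a,b,c,d\}$, whose implication graph is acyclic. Then $C=abcd$ is essential with $\ex(C)=ab$. Every $Y\subseteq C$ with $\cl(Y)\neq C$ is closed, so $abc$ and $abd$ are both maximal quasi-closed sets and $\HQC(C)=\{\{c\},\{d\}\}$. Indeed the optimum base $ab\imp cd$ needs both conclusions. Your topological-order step is also unreliable: for $\ib=\{a\imp x,\, ab\imp c\}$ with the order $a,b,c,x$, the set $C\setminus\{x\}=abc$ is not quasi-closed, since $\cl(a)=ax$.

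What is true, and what the paper proves, is that every maximal quasi-closed set has the form $C\setminus\{x\}$. Suppose a maximal $Q$ missed $x_1,\ldots,x_k$ with $k\geq 2$. Then $Q\cup\{x_i\}$ is not closed (its closure is $C$), hence not quasi-closed. This gives $Y\subseteq Q\cup\{x_i\}$ with $\cl(Y)\subset C$ and some $x_j\neq x_i$ in $\cl(Y)$. A minimal generator of $x_j$ inside $Y$ cannot lie in $Q$, so it contains $x_i$, and $x_j\,\delta\,x_i$. Since every $x_i$ has such an $x_j$, $\delta$ has a cycle, contradicting Lemma~\ref{lem:acyclic-delta}.

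\textbf{Double-shelling case.} Your dichotomy is vacuous: $\cl(\{a,b\}\cup\ex(C))=C$ always holds because $\ex(C)$ spans $C$, so it yields no contradiction. The case is easily repaired. Write $C=[u,v]$; then $\cl(Y)\neq C$ forces $\{u,v\}\not\subseteq Y$. For $a\leq b$ in $Y$ with $a\neq u$ and $a\in Q_i$, quasi-closedness applied to $\{a,v\}$ gives $[a,b]\subseteq[a,v]\subseteq Q_i$. When $a=u$, argue symmetrically via $[u,b]$. The paper instead identifies the edges as the connected components of $G_P(x,y)$.

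\textbf{Affine case.} Carath\'eodory and ``mixed simplices'' do not supply an argument. What is needed is the much stronger fact that $\ex(C)$ is the \emph{only} quasi-closed set of an essential $C$. The paper derives this from Theorem~\ref{thm:nakamura-affine} (every choice of conclusions $x\in\cl(P)\setminus P$ gives an optimum base) together with Theorem~\ref{thm:cg-optim-ib}. Hence every singleton of $C\setminus\ex(C)$ hits $\HQC(C)$, which forces a single edge.

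\textbf{Acceptant case.} Nothing concrete is proposed. The paper's idea is an extreme-point count:
\begin{enumerate}
\item Take a predecessor $C'=C\setminus\{x\}$ with $x\in\ex(C)$.
\item The sets $C'\cap Q_1$ and $C'\cap Q_2$ are closed, by quasi-closedness, and share the $q-1$ extreme points $\ex(C)\setminus\{x\}$.
\item Their join acquires, through its predecessors, extreme points $x_1\in Q_1\setminus Q_2$ and $x_2\in Q_2\setminus Q_1$.
\item This gives $q+1$ extreme points, a contradiction.
\end{enumerate}

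In short, the explicit description of the maximal quasi-closed sets, which you list only as a fallback, is the missing core of every case.
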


The first three classes are already known for having tractable optimization~\cite{adaricheva2017optimum,hammer1995quasi,nakamura2013prime} as previously mentioned.
Our approach connects and unifies these results, all the while adding a new class to the family of convex geometries with efficient optimization.
This class, acceptant convex geometries, has been studied within the context of choice functions and stable matchings~\cite{alkan2001preferences,chambers2017choice,chambers2018lexicographic,dougan2021capacity}, where convex geometries are known as path-independent choice functions.

\paragraph{Paper Organization.}
We recall some definitions and give some notations in Section~\ref{sec:preliminaries}.
In Section~\ref{sec:optim-IB-CG} we define quasi-closed hypergraphs and prove Theorem~\ref{thm:cg-optim-ib}.
In Section~\ref{sec:Q-disjoint-CG} we prove Theorems~\ref{thm:qc-disjoint} and \ref{thm:optim-classes}.
Each class of Theorem~\ref{thm:optim-classes} is discussed in a separate subsection: double-shelling convex geometries in Subsection~\ref{subsec:posets-cg}, acyclic ones in Subsection~\ref{subsec:acyclic-cg}, affine ones in Subsection~\ref{subsec:affine-cg} and acceptant convex geometries in Subsection~\ref{subsec:q-acc-cg}.
We conclude the paper in Section~\ref{sec:discussion} with some discussions for further research.

\section{Preliminaries} \label{sec:preliminaries}

All the object we consider in this paper are finite.
Let $\U$ be a set.
We denote by $\pow{\U}$ its powerset. By $|\U|$ its size.
If $\mathcal{H}$ is a hypergraph, a \emph{hitting set} of $\mathcal{H}$ is set of vertices that intersects every edge of $\H$.
A hitting set $T$ is \emph{minimal} if it is inclusion-wise minimal among hitting sets, and \emph{minimum} if it is cardinality minimum among hitting sets.
Sometimes we write a set as the concatenation of its elements, i.e., $abc$ instead of $\{a, b, c\}$.
We also withdraw brackets for singletons when it is clear from the context.
We proceed to recall definitions on closure systems, implicational bases, convex geometries.

\paragraph{Closure operator, closure systems.}
A \emph{closure operator} over a ground set $\U$ is a map $\cl : \pow{\U} \to \pow{\U}$ that satisfies, for all $A, B \subseteq \U$: (1) $A \subseteq \cl(A)$, (2) $A \subseteq B$ entails $\cl(A) \subseteq \cl(B)$ and (3) $\cl(\cl(A)) = \cl(A)$.
A set $C \subseteq \U$ is \emph{closed} w.r.t. $\cl$ if $\cl(C) = C$.
The pair $(\U, \cs)$ where $\cs = \{C \st C \subseteq \U \text{ and } \cl(C) = C\}$ is a \emph{closure system}.
A closure system is a pair $(\U, \cs)$ where $\cs$ is a collection of subsets of $\U$, called \emph{closed sets}, such that $\U \in \cs$ and $C_1 \cap C_2 \in \cs$ for every $C_1, C_2 \in \cs$.
Each closure system $(\U, \cs)$ has an associated closure operator $\cl$ over $\U$ defined by $\cl(A) = \bigcap \{C \st C \in \cs, A \subseteq C\}$ for all $A \subseteq \U$.
Therefore, there is a one-to-one correspondence between closure operator and closure systems.
The closure operator $\cl$ of a closure system induces a partition of $\pow{\U}$ into equivalence classes, where the equivalence class of a set $A$ is the family $[A]_\cl = \{B \st B \subseteq \U, \cl(B) = \cl(A)\}$.
If $(\U, \cs)$ is a closure system, the poset $(\cs, \subseteq)$ of closed sets ordered by inclusion form a \emph{(closure) lattice} where, for any $C_1, C_2 \in \cs$, $C_1 \land C_2 = C_1 \cap C_2$ and $C_1 \lor C_2 = \cl(C_1 \cup C_2)$.
Let $C_1 \subseteq C_2$ be two closed sets.
We say that $C_1$ is a \emph{predecessor} of $C_2$ in $(\cs, \subseteq)$, equivalently that $C_2$ \emph{covers} $C_1$, if there is no $C_3 \in \cs$ such that $C_1 \subset C_3 \subset C_2$.

Let $(\U, \cs)$ be a closure system and let $A \subseteq \U$.
If $C \in \cs$, we say that $A$ is a \emph{spanning set} of $C$ if $\cl(A) = C$.
For $x \in \U$ with $x \notin A$, we say that $A$ is a \emph{(non-trivial) generator} of $x$ if $x \in \cl(A)$.
An \emph{extreme point} of $A$ is an element $x \in A$ such that $x \notin \cl(A \setminus \{x\})$.
We denote by $\ex(A)$ the extreme points of $A$.
Note that for any $C \in \cs$, $\ex(C)$ is included in any (inclusion-wise) minimal spanning set of $C$.
Let $Q \subseteq \U$.
We say that $Q$ is \emph{quasi-closed} if it is non-closed and for every $A \subseteq Q$, $\cl(A) \subset \cl(Q)$ entails $\cl(A) \subseteq Q$.\footnote{Closed sets are often considered quasi-closed, but it will be more convenient for us to assume they are not.}
A set $P$ is \emph{pseudo-closed} if it is an inclusion-wise minimal quasi-closed set among the spanning sets of $\cl(P)$.
A closed set which is the closure of a quasi-closed (or pseudo-closed) set is \emph{essential}.
Note that for any closed set $C$, the quasi-closed sets of the closure system $(C, \{C' \st C' \in \cs, C' \subseteq C\})$ are exactly the quasi-closed sets of $(\U, \cs)$ included in $C$.
This holds in particular for pseudo-closed sets.
Quasi-closed sets are also related to the \emph{saturation operator} $\sigma$ of $(\U, \cs)$.
This operator is defined for all $Y \subseteq \U$ by $\sigma(Y)=\bigcup_{k\geq 1} \psi^k(Y)$, where $\psi(Y) = Y \cup \bigcup\{\phi(Z)\st Z\subseteq Y \text{ and } \phi(Z)\subset \phi(Y)$. 
The operator $\sigma$ is a closure operator whose closed sets are quasi-closed and closed sets of $(\U, \cs)$.
In other words, $Q$ is quasi-closed if and only if $Q$ is not closed and $\sigma(Q) = Q$.

We finally define convex geometries.
A closure system $(\U, \cs)$ is a \emph{convex geometry} if $\emptyset \in \cs$ and for each closed set $C \subset \U$, there exists $x \notin C$ such that $C \cup \{x\}$ is closed.
Convex geometries can be characterized by their spanning sets: $(\U, \cs)$ is a convex geometry if and only if every closed set $C$ has a unique minimal spanning set being $\ex(C)$.
As a consequence, we observe that the predecessors of each closed set $C \neq \emptyset$ are precisely the sets $C \setminus \{x\}$ for $x \in \ex(C)$. 

\paragraph{Implicational bases.}
We turn our attention to implications.
We redirect the reader to~\cite{bertet2018lattices,wild2017joy} for a detailed introduction to the topic.
An \emph{implication} over $\U$ is an expression $A \imp B$ where $A, B$ are subsets of $\U$.
The set $A$ is called the \emph{premise} and $B$ the \emph{conclusion} of the implication.
Given a collection $\ib$ of implications over $\U$,
we call the pair $(\U, \ib)$ an \emph{implicational base}.
The number of implications in $\ib$ is denoted by $\card{\ib}$.
The \emph{left-size} of $\ib$ is defined as $\leftsize{\ib} := \sum_{A \imp B \in \ib} \card{A}$.
Similarly, $\rightsize{\ib} := \sum_{A \imp B \in \ib} \card{B} $ denotes the \emph{right-size} of $\ib$.
The \emph{total size} of $\ib$ is defined as $\totalsize{\ib} := \sum_{A \imp B \in \ib} \card{A} + \card{B}$. 
Note that we equivalently have $\totalsize{\ib} = \leftsize{\ib} + \rightsize{\ib}$.

We relate implications to closure systems.
Let $A \imp B$ be an implication and $C \subseteq \U$.
We say that $C \subseteq \U$ \emph{satisfies} $A \imp B$ if $A \subseteq C$ implies $B \subseteq C$.
Let $(\U, \ib)$ be an implicational base.
The set $C$ satisfies $\ib$ if it satisfies all the implications of $\ib$.
The pair $(\U, \cs)$ where $\cs$ is the collection of subsets of $\U$ satisfying $\ib$ is known to form a closure system.
Its associated closure operator $\ib$ can be computed in polynomial time from the pair $(\U, \ib)$ using the forward chaining procedure.
For each $A \subseteq \U$, this procedure builds a sequence $A = Y_0 \subseteq Y_1 \subseteq \dots \subseteq Y_k = \ib(A)$ of sets defined by $Y_i = Y_{i-1} \cup \bigcup \{B \st A \imp B \in \ib, A \subseteq Y_{i-1}\}$.
An \emph{equivalence class} of $(\U, \ib)$ is a set of implications of $\ib$ whose premises have same closure.
Any implicational base $(\U, \ib)$ is thus partitioned by its equivalence classes.
Let $(\U, \cs)$ be a closure system.
An implication $A \imp B$ over $\U$ is \emph{valid} in $(\U, \cs)$ if every closed set satisfies the implication.
Note that $A \imp B$ is valid in $(\U, \cs)$ if and only if $B \subseteq \cl(A)$.
An implicational base is a \emph{(valid) implicational base} of $(\U, \cs)$ if the closure system associated to $(\U, \ib)$ is precisely $(\U, \cs)$.
In general, an implicational base consisting only of valid implications of $(\U, \cs)$ does not need to be an implicational base of $(\U, \cs)$.
A closure system can usually be represented by several different implicational bases.
Two implicational bases with the same closure system are \emph{equivalent}.
Let $(\U, \ib)$ be an implicational base of $(\U , \cs)$.
We say that $(\U, \ib)$ is \emph{minimum} if $\card{\ib} = \min \{\card{\ib'} \st (\U, \ib') \text{ is an implicational base of } (\U, \cs)\}$.
Similarly, we say that $(\U, \ib)$ is \emph{optimum} (resp. \emph{left-optimum}, \emph{right-optimum}) if $\ib$ minimizes $\totalsize{\cdot}$ (resp. $\leftsize{\cdot}$, $\rightsize{\cdot}$)
among the implicational bases of $(\U, \cs)$.
We say that $(\U, \ib)$ is \emph{left-reduced} if for every $A \imp B \in \ib$ and every $A' \subset A$, $(\ib \setminus \{A \imp B\}) \cup \{A' \imp B\}$ is no longer an implicational base of $(\U, \cs)$.
Similarly, $(\U, \ib)$ is \emph{right-reduced} if $(\ib \setminus \{A \imp B\}) \cup \{A \imp B'\}$ is no longer an implicational base of $(\U, \cs)$ for every $B' \subset B$ and every $A \imp B \in \ib$.
Finally, $(\U, \ib)$ is \emph{reduced} if it is both left- and right-reduced.
Below we recall known results regarding optimum implicational bases.
Notably, we recall that optimality can be achieved by optimizing each equivalence class of the implicational base independently from the others.
In other words, the total sizes of the implicational bases of each equivalence class in an optimum base is fixed.
\begin{theorem}[see, e.g., \cite{ausiello1986minimum,maier1983theory,bichoupan2022complexity,mannila1983relationship}]\label{thm:optim-classes}
Let $(\U, \cs)$ be a closure system and let $(\U, \ib)$ be an implicational base of $(\U, \cs)$.
Then:
\begin{enumerate}[label=(\arabic*)]
    \item if $(\U, \ib)$ is (left-)optimum, it is also minimum;
    \item $(\U, \ib)$ is optimum if and only if it is both left- and right-optimum;
    \item $(\U, \ib)$ is optimum if and only if all its equivalence classes are optimum.
\end{enumerate}
\end{theorem}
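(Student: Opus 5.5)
The plan is to rely on the Duquenne--Guigues (canonical) base and the standard theory of pseudo-closed sets for items (1) and (3), and to obtain (2) from the fact that an optimum base is in particular minimum. First I recall the classical fact that every implicational base $(\U,\ib)$ of $(\U,\cs)$ satisfies the following: for each pseudo-closed set $P$ there is an implication $A\imp B\in\ib$ with $A\subseteq P$ and $\cl(A)=\cl(P)$. To see this, note that $P$ is not closed, so $P$ violates some implication; the forward chaining on $P$ must use an implication whose premise lies in $P$ and whose conclusion leaves $P$. Quasi-closedness of $P$ then forces $\cl(A)=\cl(P)$, since otherwise $\cl(A)\subseteq P$. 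Next, distinct pseudo-closed sets $P_1,P_2$ cannot be served by the same implication when they have the same closure. Otherwise both would contain $A$ with $\cl(A)=\cl(P_i)$, and the standard argument shows $P_1\cap P_2$ is quasi-closed or closed with the same closure, contradicting minimality. This gives $|\ib|\geq$ the number of pseudo-closed sets, which equals $|\ib_{DG}|$. So minimum bases have exactly that size, and an $\ib$ achieving it has an injective assignment per equivalence class.

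For (3), the key observation is that the closure of any set $Y$ is unaffected by the implications of $\ib$ whose premise closure $C$ is not contained in $\cl(Y)$. Moreover, whether $\ib$ generates $(\U,\cs)$ can be checked class by class. Concretely, two bases $\ib,\ib'$ that agree outside the class $\ib_C$ (premises with closure $C$) are equivalent if and only if $\ib_C$ and $\ib'_C$ produce the same closures on subsets of $C$ when combined with the classes of the strictly smaller essential sets. This exchange property lets us replace one equivalence class of an optimum base by a smaller equivalent one, which contradicts optimality. Conversely, if every class is optimum, then the sizes add up to the minimum, because any base decomposes into classes indexed by the same essential sets, each of size at least the class optimum. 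I will prove the exchange lemma by induction along the lattice of essential sets.

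For (2), the direction ``left- and right-optimum implies optimum'' is immediate, since $\totalsize{\ib}=\leftsize{\ib}+\rightsize{\ib}$ and each summand is individually minimal. For the converse, I need that the minimal left-size and the minimal right-size can be realized simultaneously. Using (3), I reduce to a single equivalence class. Within a class, I would take a left-optimum class and right-reduce it, and argue that right-reduction never touches premises and that conclusions can be chosen independently of premises once the premises form the minimal left family. \textbf{This independence is the main obstacle}: in general closure systems, the choice of premises constrains which conclusions suffice. I would attack it through the Ausiello et al.\ hypergraph argument: both optima are attained by bases in which each premise is a minimal generator of the pseudo-closed set it serves.

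For (1), I combine these pieces. A left-optimum base must be minimum; otherwise, by the counting above, two implications serve the same pseudo-closed set, and one can be dropped after merging conclusions, which strictly lowers the left-size. An optimum base is left-optimum by (2), and hence minimum as well.
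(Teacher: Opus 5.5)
\textbf{Overall.} The paper gives no proof of this theorem; it is quoted from the literature. Your proposal therefore has to stand on its own, and it has a genuine gap. The hard half of (2) is that an optimum base is both left- and right-optimum. This is exactly the step you call ``the main obstacle'' and then leave to an unspecified hypergraph argument.

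\textbf{Why your sketched route fails.} Right-reducing a left-optimum class only makes the conclusions inclusion-minimal, not minimum. In Example~\ref{ex:build-optim}, the class of $\lset{cdef}$ has left-optimum premises $f$, $\lset{ce}$, $\lset{de}$. These admit the right-reduced conclusions $f\imp\lset{cde}$, $\lset{ce}\imp f$, $\lset{de}\imp f$, of right-size $5$, while $4$ is attainable.

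\textbf{Consequences for (1).} You derive ``optimum $\Rightarrow$ minimum'' through (2), so that part inherits the gap. Your direct argument for ``left-optimum $\Rightarrow$ minimum'' is also loose, for two reasons.
\begin{itemize}
\item Your counting does not show that two implications serve the same pseudo-closed set; a surplus implication may serve none.
\item Dropping an implication lowers the left-size strictly only when its premise is nonempty. On $\U=\{b,c\}$ with $\cs=\{\U\}$, the base $\{\emptyset\imp b,\emptyset\imp c\}$ is optimum and left-optimum but not minimum. So (1) tacitly needs a convention excluding such degenerate implications.
\end{itemize}

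\textbf{The missing idea: only the saturation of a premise matters.} Quasi-closed sets are $\sigma$-closed, so $A\subseteq Q$ forces $\sigma(A)\subseteq Q$. By Proposition~\ref{prop:carac-valid-qc}, $A\imp B$ therefore serves exactly the quasi-closed sets $Q\supseteq\sigma(A)$ with $\cl(Q)=\cl(A)$ and $B\cap(\cl(Q)\setminus Q)\neq\emptyset$. This gives a replace-and-merge step:
\begin{itemize}
\item If $\sigma(A)=\cl(A)$, the implication serves nothing and can be deleted.
\item Otherwise $\sigma(A)$ contains a pseudo-closed $P$ with $\cl(P)=\cl(A)$. Replacing $A$ by a minimum-size spanning subset of $P$ can only enlarge what the implication serves.
\item Finally, merge implications with equal premises by uniting their conclusions.
\end{itemize}

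\textbf{Proving (2).} Apply this step to a right-optimum base. The result is still a base, and its right-size has not increased. It has exactly one implication per pseudo-closed set, because each $P$, being quasi-closed, is served by some $A$ with $\sigma(A)=P$. Its left-size is therefore $\sum_P\min\{\card{Y}\st Y\subseteq P,\ \cl(Y)=\cl(P)\}$, which is the least possible by your own counting. So a base that is simultaneously left- and right-optimum exists, and (2) follows by comparing sizes.

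\textbf{Proving (1) and (3).} The same construction proves (1) directly. Applied to a base with more implications than pseudo-closed sets (and nonempty premises), it strictly lowers the left-size and does not raise the right-size. For (3), Proposition~\ref{prop:carac-valid-qc} gives your exchange lemma at once. Its condition for quasi-closed sets with closure $C$ involves only implications whose premises span $C$. Hence the $C$-class of any base can be swapped into any other base, with no induction needed.
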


The \emph{canonical base} of $(\U, \cs)$ is the implicational base $(\U, \ib_c)$ where:
\[ \ib_c := \{P \imp \cl(P) \setminus P \st P \text{ is pseudo-closed} \} \]

\begin{theorem}[see, e.g., \cite{guigues1986familles,wild1994theory}] \label{thm:classic-pseudo}
Let $(\U, \cs)$ be a closure system and let $(\U, \ib_c)$ be its canonical base.
Then:
\begin{enumerate}[label=(\arabic*)]
    \item $(\U, \ib_c)$ is a minimum base of $(\U, \cs)$ and every other implicational base $(\U, \ib)$ of $(\U, \cs)$ contains for each pseudo-closed set $P$, an implication $A \imp B$ such that $A \subseteq P$ and $\cl(A) = \cl(P)$.
    \item for each left-optimum base $(\U, \ib)$ of $(\U, \cs)$ and for each pseudo-closed set $P$, $\ib$ contains an implication $A \imp B$ such that $A \subseteq P$, $\cl(A) = \cl(P)$ and $\card{A} = \min \{ \card{Y} \st Y \subseteq P, \cl(Y) = \cl(P)\}$.
\end{enumerate}
\end{theorem}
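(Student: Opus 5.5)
The plan is to prove both parts with one exchange lemma, stated for an arbitrary implicational base $(\U,\ib)$ of $(\U,\cs)$:
\begin{quote}
\emph{For every pseudo-closed set $P$ there is $A\imp B\in\ib$ with $A\subseteq P$ and $\cl(A)=\cl(P)$.}
\end{quote}

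To prove it, note that $P$ is not closed, so $P$ violates some $A\imp B\in\ib$. Then $A\subseteq P$ and $B\not\subseteq P$. Validity gives $B\subseteq\cl(A)$, and $\cl(A)\subseteq\cl(P)$. If the inclusion $\cl(A)\subset\cl(P)$ were strict, quasi-closedness of $P$ would give $\cl(A)\subseteq P$, hence $B\subseteq P$, a contradiction. So $\cl(A)=\cl(P)$.

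Next I show that distinct pseudo-closed sets $P\neq P'$ cannot receive the same implication. This only matters when $\cl(P)=\cl(P')$. Put $Q=P\cap P'$. Any $Z\subseteq Q$ with $\cl(Z)\subset\cl(P)$ has $\cl(Z)\subseteq P$ and $\cl(Z)\subseteq P'$, hence $\cl(Z)\subseteq Q$. Suppose $\cl(Q)=\cl(P)$. Then $Q$ is quasi-closed: it is not closed, since $Q\subseteq P\subset\cl(P)$, and the previous sentence gives the subset condition. Moreover $Q$ spans $\cl(P)$, so minimality of $P$ and of $P'$ forces $Q=P=P'$, a contradiction. Hence every $A\subseteq P\cap P'$ has $\cl(A)\subset\cl(P)$, so no single premise serves both. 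The map $P\mapsto A\imp B$ is therefore injective, and $\card{\ib}\ge\card{\ib_c}$.

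It remains, for (1), to check that $\ib_c$ really is a base of $(\U,\cs)$. Every implication $P\imp\cl(P)\setminus P$ is valid, so every closed set satisfies $\ib_c$. For the converse I would prove, by induction on $\card{\cl(Y)}$, that every $Y$ satisfying $\ib_c$ is closed or quasi-closed.
\begin{itemize}
\item Take $Z\subseteq Y$ with $\cl(Z)\subset\cl(Y)$, and set $W=Y\cap\cl(Z)$. Then $W$ satisfies $\ib_c$, because it is an intersection of a model with a closed set. Also $Z\subseteq W\subseteq\cl(Z)$, so $\cl(W)=\cl(Z)$.
\item By induction $W$ is closed or quasi-closed. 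If it were quasi-closed, it would contain a pseudo-closed $P$ with $\cl(P)=\cl(W)$, namely a minimal quasi-closed spanning subset. The implication $P\imp\cl(P)\setminus P$ would then force $\cl(W)\subseteq W$, which is impossible.
\item So $W$ is closed, i.e. $\cl(Z)=W\subseteq Y$. This is exactly the quasi-closedness condition, so $Y$ is closed or quasi-closed.
\end{itemize}
Applying the same pseudo-closed argument to $Y$ itself shows it cannot be quasi-closed, so it is closed.

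For (2), let $\ib$ be left-optimum and $P$ pseudo-closed, and let $A\imp B$ be given by the lemma. Suppose some $Y\subseteq P$ with $\cl(Y)=\cl(P)$ has $\card{Y}<\card{A}$. I want to show that $\ib'=(\ib\setminus\{A\imp B\})\cup\{Y\imp B\}$ is still a base, which contradicts left-optimality.

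This is the step I expect to be the main obstacle, because a model of $\ib'$ need not obviously satisfy $A\imp B$. My plan is to take a hypothetical model $Z$ of $\ib'$ that is not a model of $\ib$. Then $A\subseteq Z$, $B\not\subseteq Z$ and $Y\not\subseteq Z$. Intersect with $\cl(P)$, putting $W=Z\cap\cl(P)$. This $W$ satisfies $\ib'$ and has $\cl(W)=\cl(P)$.

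Rerun the induction above with $\ib$ in place of $\ib_c$. This is legitimate, since all implications with smaller-closure premises are untouched. It shows $W$ is quasi-closed, so $W$ contains a pseudo-closed $P''$ spanning $\cl(P)$. One then needs to show that the implication assigned to $P''$ by the lemma, applied to $\ib'$, lies in $\ib'$ and is violated by $W$. This step needs extra care, likely by choosing $Y$, $A$ and $Z$ extremally, for instance taking $Z$ minimal. If that fails, I would fall back on the classical route through the canonical base, namely Wild's argument that one can swap in minimum-cardinality spanning subsets of each pseudo-closed set class by class.
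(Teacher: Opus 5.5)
Your part (1) is sound: the exchange lemma, the injectivity argument via $P\cap P'$, and the induction showing that every model of $\ib_c$ is closed all go through. The paper only cites this theorem, so there is no in-paper proof to compare against.

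The gap is in part (2), at exactly the step you flag. You reach a model $W=Z\cap\cl(P)$ of $\ib'$ that is quasi-closed, spans $\cl(P)$, contains $A$, and does not contain $Y$, but you do not yet have a contradiction. The route you sketch does not close it. Invoking your lemma for $\ib'$ is circular, because the lemma assumes $\ib'$ is already a base. Invoking it for $\ib$ at a pseudo-closed $P''\subseteq W$ gives an implication $A''\imp B''$ violated by $P''$. When $P''\neq P$, this implication lies in $\ib'$, and nothing prevents $B''\subseteq W$, so $W$ need not violate it. Choosing $Z$, $A$ or $Y$ extremally does not change this, and the fallback to Wild's argument is not a proof. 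As written, (2) is not established.

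The missing idea is the intersection argument you already used for injectivity, applied to $P\cap W$ instead of $P\cap P'$.
\begin{itemize}
\item Since $A\subseteq P\cap W$, we have $\cl(P\cap W)=\cl(P)$.
\item $P\cap W$ is not closed, because it lies in $P\subset\cl(P)$.
\item For $Z'\subseteq P\cap W$ with $\cl(Z')\subset\cl(P)$, quasi-closedness of $P$ and of $W$ gives $\cl(Z')\subseteq P\cap W$.
\end{itemize}
So $P\cap W$ is a quasi-closed spanning set of $\cl(P)$ contained in $P$. Minimality of $P$ forces $P\subseteq W$, hence $Y\subseteq W\subseteq Z$, contradicting $Y\nsubseteq Z$. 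In other words, every quasi-closed set spanning $\cl(P)$ that contains $A$ already contains $P$. Thus $Y\imp B$ takes over every role of $A\imp B$, and $\ib'$ is a base of strictly smaller left-size, contradicting left-optimality.

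Separately, the quasi-closedness of $W$ needs no rerun of the induction. Any model of $\ib'$ whose closure is strictly below $\cl(P)$ cannot contain $A$, so it is a model of $\ib$ and therefore closed. Apply this to $W\cap\cl(Z')$ for each $Z'\subseteq W$ with $\cl(Z')\subset\cl(P)$. Also, $W$ is not closed, since $B\subseteq\cl(W)$ but $B\nsubseteq W$.
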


Finally, we recall that the saturation operator of a closure system can be computed from any of its implicational bases in polynomial time.

\begin{lemma} \cite[Lemma 4]{wild1994theory} \label{lem:wild-saturation-poly}
Let $(\U, \ib)$ be an implicational base of $(\U, \cs)$ and let $Y \subseteq \U$.
Then, $\sigma(Y) = \ib_Y(Y)$ where $\ib_Y = \ib \setminus \{A \imp B \st A \imp B \in \ib, \cl(A) = \cl(Y)\}$.
\end{lemma}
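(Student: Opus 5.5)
Since $\psi(Y)\supseteq Y$, the sequence $\psi^k(Y)$ is increasing, and $\sigma(Y)$ is its limit. Write $\ib_Y(\cdot)$ for the forward-chaining closure under $\ib_Y$. I will prove the two inclusions $\ib_Y(Y) \subseteq \sigma(Y)$ and $\sigma(Y) \subseteq \ib_Y(Y)$ separately, treating first the degenerate case where $Y$ is closed. If $Y$ is closed, then $\psi(Y)=Y$, so $\sigma(Y)=Y$. Every implication $A\imp B$ of $\ib_Y$ is valid, so $A\subseteq Y$ forces $B\subseteq \cl(A)\subseteq Y$; hence $\ib_Y(Y)=Y$ as well. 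From now on I assume $Y$ is not closed, although the arguments below do not really depend on this.

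For $\ib_Y(Y)\subseteq\sigma(Y)$, I will show that $S:=\sigma(Y)$ satisfies every implication of $\ib_Y$ and contains $Y$. Then minimality of the forward-chaining closure gives the inclusion. First, $\cl(S)=\cl(Y)$, by induction on $k$: each set added by $\psi$ is some $\cl(Z)\subset\cl(Y)$, so it does not enlarge the closure. Also $S$ is a fixpoint of $\psi$, since everything is finite. Now take $A\imp B\in\ib_Y$ with $A\subseteq S$. Then $\cl(A)\subseteq\cl(S)=\cl(Y)$, and $\cl(A)\neq\cl(Y)$ because $A\imp B\in\ib_Y$. So $\cl(A)\subset\cl(S)$, and the fixpoint property $\psi(S)=S$ gives $\cl(A)\subseteq S$. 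Since the implication is valid, $B\subseteq\cl(A)\subseteq S$.

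For $\sigma(Y)\subseteq\ib_Y(Y)$, let $T:=\ib_Y(Y)$. First, $T\subseteq\cl(Y)$ because all implications are valid, so $\cl(T)=\cl(Y)$. It then suffices to show $\psi(T)=T$: an induction gives $\psi^k(Y)\subseteq\psi^k(T)=T$, using the monotonicity of $\psi$ in the form needed here, namely that for $Y\subseteq T$ with $\cl(T)=\cl(Y)$, any $Z\subseteq Y$ with $\cl(Z)\subset\cl(Y)$ is also admissible for $T$. So take $Z\subseteq T$ with $\cl(Z)\subset\cl(T)=\cl(Y)$; we must show $\cl(Z)\subseteq T$. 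Since $\ib$ is a base of $(\U,\cs)$, forward chaining from $Z$ with the full $\ib$ reaches $\cl(Z)$. Every implication $A\imp B$ fired in that computation has $A\subseteq\cl(Z)$, hence $\cl(A)\subseteq\cl(Z)\subset\cl(Y)$. So none of these implications was removed, and the whole chain is available in $\ib_Y$. Since $Z\subseteq T$ and $T$ is closed under $\ib_Y$, we get $\cl(Z)=\ib(Z)\subseteq T$.

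The main obstacle is bookkeeping rather than depth. One needs to check carefully that every implication used in the forward chaining from $Z$ has closure strictly below $\cl(Y)$, and that $\psi$ is applied with respect to the fixed value $\cl(Y)$. Once $\cl(\sigma(Y))=\cl(Y)=\cl(\ib_Y(Y))$ is established, both points are immediate. Polynomiality then follows because $\ib_Y$ can be built by computing $\cl(A)$ for each premise by forward chaining, and $\ib_Y(Y)$ is one more forward chaining.
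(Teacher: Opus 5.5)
Your proof is correct. The paper gives no proof of this lemma; it imports it from \cite{wild1994theory}. So there is no in-paper argument to compare against, and your two-inclusion argument is a complete, self-contained justification.

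Both directions rest on the same two facts, which you establish properly. First, $\sigma(Y)$ and $\ib_Y(Y)$ both lie in $\cl(Y)$ and have closure $\cl(Y)$. Second, each is a fixpoint of the operator the other one is the least fixpoint of.

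\begin{itemize}
\item For $\ib_Y(Y)\subseteq\sigma(Y)$: the $\psi$-fixpoint property, together with validity, makes $\sigma(Y)$ satisfy every implication of $\ib_Y$.
\item For $\sigma(Y)\subseteq\ib_Y(Y)$: forward chaining from any $Z$ with $\cl(Z)\subset\cl(Y)$ only fires implications whose premises lie in $\cl(Z)$. None of these is removed, so $\ib_Y(Y)$ is a $\psi$-fixpoint.
\end{itemize}

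You were right to invoke only the restricted monotonicity of $\psi$, namely $Y\subseteq T$ and $\cl(Y)=\cl(T)$. $\psi$ is not monotone in general, because its threshold $\cl(\cdot)$ changes with the argument. Your induction maintains $\cl(\psi^k(Y))=\cl(Y)=\cl(T)$ at every step, which is exactly what that step needs.

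The only superfluous part is the separate treatment of closed $Y$; your general argument already covers that case.
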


\section{Quasi-closed hypergraphs and optimum bases of convex geometries} \label{sec:optim-IB-CG}

In this section we give a characterization of the optimum bases of a convex geometry.
This characterization relies on the minimum hitting sets of the hypergraphs of the quasi-closed sets that are, within a same equivalence class, inclusion-wise maximal.

The starting point of our approach lies in the more general context of arbitrary closure systems.
Consider some closure system  $(\U, \cs)$ and let $\ib$ be a collection of valid implications of $(\U, \cs)$.
In the next proposition, we observe that what makes $(\U, \ib)$ an implicational base of $(\U, \cs)$ or not does only depend on its ability to make every quasi-closed set indeed non-closed using implications with premises from its equivalence class.
Note that this has also been observed by Bichoupan~\cite[unpublished]{bichoupan2023independence}, although we propose a slightly more precise form.
We still give a proof of this claim for self-completeness.

\begin{proposition}[Lemma 2 in \cite{bichoupan2023independence}] \label{prop:carac-valid-qc}
Let $\ib$ be a collection of valid implications of $(\U, \cs)$.
The pair $(\U, \ib)$ is an implicational base of $(\U, \cs)$ if and only if for every quasi-closed set $Q$ of $\cs$, there exists $A \imp B \in \ib$ such that $A \subseteq Q$, $\cl(A) = \cl(Q)$, and $B$ contains an element of $\phi(Q) \setminus Q$.
\end{proposition}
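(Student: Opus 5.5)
We prove the two directions separately. Throughout, the closure operator of $(\U, \ib)$ is denoted $\ib(\cdot)$ and that of $(\U, \cs)$ is $\cl$. Since every implication of $\ib$ is valid, every closed set of $\cs$ satisfies $\ib$. Hence $\ib(Y) \subseteq \cl(Y)$ for all $Y$, and $(\U, \ib)$ is a base of $(\U, \cs)$ if and only if no non-closed set $Y \subseteq \U$ satisfies $\ib$.

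\emph{Only if.} Assume $(\U, \ib)$ is a base and let $Q$ be quasi-closed; in particular $Q$ is not closed. Put $\ib_Q = \ib \setminus \{A \imp B \st \cl(A) = \cl(Q)\}$. By Lemma~\ref{lem:wild-saturation-poly}, $\ib_Q(Q) = \sigma(Q) = Q$, so $Q$ satisfies $\ib_Q$. Since $Q \neq \cl(Q) = \ib(Q)$, $Q$ must violate some implication $A \imp B \in \ib \setminus \ib_Q$. Violation means $A \subseteq Q$ and $B \not\subseteq Q$, and membership in $\ib \setminus \ib_Q$ means $\cl(A) = \cl(Q)$. By validity $B \subseteq \cl(A) = \cl(Q)$, so $B$ contains an element of $\cl(Q) \setminus Q$.

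If one prefers not to rely on the lemma, there is a direct argument. Take an implication $A \imp B$ violated by $Q$ with $A \subseteq Q$. If $\cl(A) \subset \cl(Q)$, then quasi-closedness gives $B \subseteq \cl(A) \subseteq Q$, a contradiction. Since $A \subseteq Q$ forces $\cl(A) \subseteq \cl(Q)$, we must have $\cl(A) = \cl(Q)$.

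\emph{If.} Assume the condition holds and suppose, for a contradiction, that some non-closed set satisfies $\ib$. Among non-closed sets satisfying $\ib$, choose $Y$ with $\cl(Y)$ inclusion-minimal. We claim $Y$ is quasi-closed. Let $Z \subseteq Y$ with $\cl(Z) \subset \cl(Y)$. The set $\ib(Z)$ satisfies $\ib$ and $\cl(\ib(Z)) = \cl(Z) \subset \cl(Y)$, so minimality forces $\ib(Z)$ to be closed, that is, $\ib(Z) = \cl(Z)$. Because $Y$ satisfies $\ib$ and contains $Z$, we get $\ib(Z) \subseteq Y$, hence $\cl(Z) \subseteq Y$. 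So $Y$ is quasi-closed. The hypothesis then yields $A \imp B \in \ib$ with $A \subseteq Y$ and $B \not\subseteq Y$, contradicting that $Y$ satisfies $\ib$.

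The main obstacle is the \emph{if} direction, where one has to manufacture a quasi-closed set out of an arbitrary non-closed set satisfying $\ib$. Choosing $Y$ with minimal closure resolves this. Note that the condition $\cl(A) = \cl(Q)$ is not needed in this direction.
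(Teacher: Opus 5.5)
Your proof is correct and takes essentially the paper's route. Your direct only-if argument is exactly the paper's, and your variant via Lemma~\ref{lem:wild-saturation-poly} is also valid because the base property is assumed in that direction. Your if direction is the same minimal-closure counterexample shown to be quasi-closed. One small improvement: you minimize over all non-closed sets satisfying $\ib$, not over quasi-closed sets $Q$ with $\ib(Q) \neq \cl(Q)$. This gives $\ib(Z) = \cl(Z)$ directly, so you avoid the paper's implicit appeal to completeness of the canonical base, which it uses both to reduce to quasi-closed sets and to pass from pseudo-closed sets to $\ib(Y) = \cl(Y)$.
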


\begin{proof}
We start with the only if part.
Assume $(\U, \ib)$ is an implicational base of $(\U, \cs)$ and let $Q$ be a quasi-closed set of $\cs$.
Since $(\U, \ib)$ is an implicational base of $(\U, \cs)$ and $Q$ is not closed, there exists $A \imp B \in \ib$ such that $A \subseteq Q$ but $B \nsubseteq Q$.
As $Q$ is quasi-closed, $\cl(A) \subset \cl(Q)$ would entail $B \subseteq \cl(A) \subset Q$, a contradiction with the fact that $B \nsubseteq Q$.
We deduce that $\cl(A) = \cl(Q)$ and that $B$ contains an element of $\cl(Q) \setminus Q$.

We move to the if part.
It is sufficient to show that $\cl(Q) = \ib(Q)$ for each quasi-closed set $Q$.
Since $(\U, \ib)$ contains only valid implications of $(\U, \cs)$, we readily have $\ib(Q) \subseteq \cl(Q)$.
Hence, it remains only to argue that $\cl(Q) \subseteq \ib(Q)$ for each quasi-closed set $Q$.
Assume for contradiction this is not the case and let $C$ be an inclusion-wise minimal closed set of $\cs$ where such a $Q$ exists.
We thus have $\ib(Q) \subset \cl(Q)$.
By assumption on $\ib$, $\ib(Q)$ cannot be quasi-closed in $(\U, \cs)$ as otherwise there would exist an implication $A \imp B$ in $\ib$ such that $A \subseteq \ib(Q)$, $A$ spans $\cl(Q)$ and $B \nsubseteq \ib(Q)$.
Therefore, there exists $Y \subseteq \ib(Q)$ such that $\cl(Y) \subset \cl(\ib(Q)) = C$ and $\cl(Y) \nsubseteq \ib(Q)$.
However, by minimality assumption on $C$, for every $C' \in \cs$ such that $C' \subset C$ and any quasi-closed set $Q'$ spanning $C'$, $\ib(Q') = \cl(Q')$.
This holds in particular for all pseudo-closed sets $P$ such that $\cl(P) \subset C$.
Therefore, $\cl(Y) = \ib(Y)$ must hold, and $\ib(Y) \subseteq \ib(Q)$ too as $Y \subseteq \ib(Q)$.
This contradicts $\cl(Y) \nsubseteq \ib(Q)$, and concludes the proof.
\end{proof}

\begin{example} \label{ex:carac-valid-qc}
Let $\U = \{a, \dots, f\}$ and consider the closure system $(\U, \cs)$ depicted in Figure~\ref{fig:carac-valid-qc}.
It has three essential sets being $\lset{ab}$, $\lset{cdef}$, $\lset{abcdef}$ and its canonical base $(\U, \ib_c)$ is given by:
\[ \ib_c = \{a \imp b, b \imp a, f \imp \lset{cde}, \lset{ce} \imp \lset{df}, \lset{de} \imp \lset{cf}, \lset{abe} \imp \lset{cdf}, \lset{abd} \imp \lset{cef} \}
\] 
Consider for instance the quasi-closed sets of $\lset{cdef}$.
They coincide with its non-closed spanning sets: $\lset{ce}$, $\lset{de}$, $\lset{f}$ and every proper subset of $\lset{cdef}$ that includes at least one of these three sets.
By Proposition~\ref{prop:carac-valid-qc}, each quasi-closed set of $\lset{cdef}$ is non-closed in $(\U, \ib_c)$ due to an implication with a premise spanning $\lset{cdef}$.
For example, the quasi-closed sets $\lset{cf}$ and $\lset{ef}$ relate to the implication $f\imp \lset{cde}$.
\begin{figure}[ht!]
    \centering
    \includegraphics[width=\linewidth]{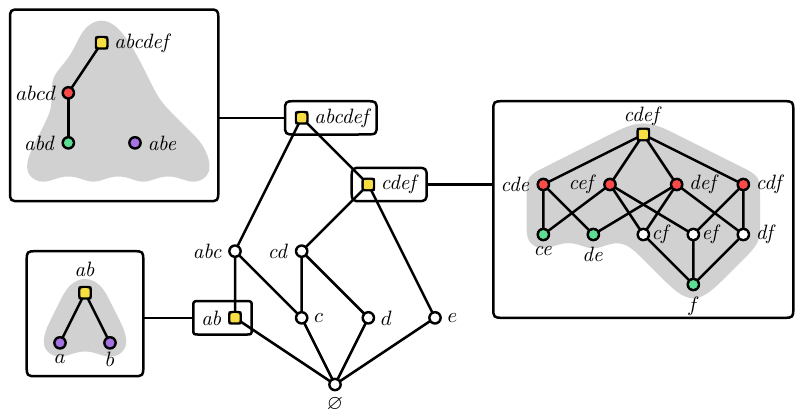}
    \caption{The closure system of Example~\ref{ex:carac-valid-qc}. It has three essential sets (yellow square nodes) being $\lset{ab}$, $\lset{cdef}$, $\lset{abcdef}$. 
    To each essential set is associated a box representing its quasi-closed sets (circle nodes) ordered by inclusion within its spanning sets (shaded zone).
    For instance the quasi-closed sets of $\lset{abcdef}$ are $\lset{abd}$, $\lset{abe}$, $\lset{abcd}$.
    Minimal quasi-closed sets are green (e.g., $\lset{abd}$), maximal ones red (e.g., $\lset{abcd}$), and those that are both minimal and maximal purple (e.g., $\lset{abe}$).}
    \label{fig:carac-valid-qc}
\end{figure}
\end{example}

Suppose that $(\U, \ib)$ is indeed an implicational base of $(\U, \cs)$ and let $C$ be an essential set of $\cs$.
From Proposition~\ref{prop:carac-valid-qc}, we obtain that $T := \bigcup \{B \st A \imp B \in \ib \text{ and } \cl(A) = C\}$ is a hitting set of the hypergraph with vertex set $C$ and edges the complements of the quasi-closed sets of $C$. 
Since some quasi-closed sets may be comparable, we can more simply observe that $T$ is a hitting set of the hypergraph $\HQC(C)$, which we call the \emph{quasi-closed hypergraph of $C$}, having vertex set $C$ and edge set:
\[ \{ C \setminus Q \st Q \text{ is an inclusion-wise maximal quasi-closed set of } C\}. \]

\begin{example}[continued] \label{ex:qc-hyp}
The maximal quasi-closed sets of $\lset{cdef}$ are $\lset{cde}$, $\lset{cef}$, $\lset{def}$, $\lset{cdf}$, so that $\HQC(\lset{cdef}) = \{f, d, c, e\}$.
Similarly, the maximal quasi-closed set of $\lset{abcdef}$ are $\lset{abcd}$, $\lset{abe}$.
We thus have $\HQC(\lset{abcdef}) = \{\lset{ef}, \lset{cdf}\}$.
\end{example}

Proposition~\ref{prop:carac-valid-qc} now reads as follows: to build implicational bases of $(\U, \cs)$, one has to select for each essential set $C$ enough premises to subsume all quasi-closed sets of $C$ and then a hitting set $T$ of $\HQC(C)$ to distribute over these premises.
This applies in particular to minimum implicational base of $(\U, \cs)$ that can be obtained by applying the following steps to each essential set $C$:
\begin{enumerate}
    \item choose a (unique) premise $A \subseteq P$ for each pseudo-closed set $P$ of $C$;
    \item choose a hitting set $T$ of $\HQC(C)$;
    \item distribute $T$ over the premises $A$ so as to fulfill Proposition~\ref{prop:carac-valid-qc}.
    Note that $T$ needs not be partitioned over the premises.
\end{enumerate}
The resulting $(\U, \ib)$ is indeed a minimum implicational base of $(\U, \cs)$ due to Proposition~\ref{prop:carac-valid-qc} and Theorem~\ref{thm:classic-pseudo}.
Besides, we indeed obtain all possible minimum implicational bases as we choose for each essential set $C$ and each pseudo-closed set of $C$ all possible premises $A$ and all possible hitting set $T$ of $\HQC(C)$, where $T$ can be any subset of $C$, that we distribute in all valid ways.
In particular, the canonical base is built by taking each pseudo-closed set $P$ as premise and repeating the trivial hitting set $C$ of $\HQC(C)$ in each conclusions, possibly withdrawing useless elements.

\begin{example}[continued] \label{ex:build-canonical}
We illustrate the construction of $(\U, \ib_c)$ in Example \ref{ex:carac-valid-qc} by considering each essential set:
\begin{itemize}
    \item $\HQC(\lset{ab}) = \{a, b\}$ with hitting set $ab$ and pseudo-closed sets $a$, $b$. The implications thus are $a \imp b$ and $b \imp a$ ;
    \item $\HQC(\lset{cdef}) = \{f, d, c, e\}$ with hitting set $\lset{cdef}$ and pseudo-closed sets $\lset{ce}$, $\lset{de}$, $\lset{f}$.
    We obtain implications $f \imp \lset{cde}$, $\lset{ce} \imp \lset{df}$, $\lset{de} \imp \lset{cf}$ ;
    \item $\HQC(\lset{abcdef}) = \{\lset{ef}, \lset{cdf}\}$ with hitting set $\lset{abcdef}$.
    The pseudo-closed sets being $\lset{abe}$, $\lset{abd}$, we obtain $\lset{abe} \imp \lset{cdf}$, $\lset{abd} \imp \lset{cef}$.
\end{itemize}
\end{example}

Let us restrict our attention to optimum bases, that are minimum by Theorem~\ref{thm:optim-classes}.
By Theorem~\ref{thm:optim-classes}, we can optimize the equivalence class of each essential set $C$ independently from the others, and optimality is found by finding left- and right-optimality:
\begin{enumerate}
    \item left-optimality is obtained by selecting for each pseudo-closed set $P$ a minimal spanning set $A \subseteq P$ of $\cl(P)$ with minimum cardinality (item (2) in Theorem~\ref{thm:classic-pseudo}) ;
    \item right-optimality is obtained by selecting a hitting set $T$ of $\HQC(C)$ that can be distributed over the premises of $C$ in a minimum way.
\end{enumerate}
For right-optimality in general, a given element of $T$ might need to be repeated in conclusion of different implications.
As a consequence, $T$ does not need to be a minimum hitting set of $\HQC(C)$ in general as we illustrate below:

\begin{example}[continued] \label{ex:build-optim}
We construct an optimal implicational base $(\U, \ib)$ for the closure system of Example~\ref{ex:carac-valid-qc} by looking at each essential set.
For $\lset{ab}$, the unique possible choice is $a \imp b$ and $b \imp a$.
For $\lset{cdef}$, the left-optimum premises are its minimal spanning sets, $\lset{f}$, $\lset{ce}$, $\lset{de}$.
As for conclusions, $\HQC(C)$ has a unique hitting set being $\lset{cdef}$.
However, the distribution of $\lset{cdef}$ may produce right-reduced implications that are not optimum.
For instance $f \imp \lset{cde}$, $\lset{ce} \imp f$, $\lset{de} \imp f$ cannot be reduced but is not optimum as we can reach 4 conclusions only with: $f \imp \lset{ce}$, $\lset{ce} \imp d$, $\lset{de} \imp f$.
Finally for $\lset{abcdef}$, optimum premises are for instance $ad$, $ae$.
As for conclusions, $\HQC(\lset{abcdef}) = \{\lset{ef}, \lset{cdf}\}$ has hitting sets, e.g., $f$ and $\lset{ce}$ which are of different size.
Yet, since two implications are needed, both hitting sets produce optimum implications: one can take either $\{ad \imp f, ae \imp f\}$ or $\{ad \imp e, ae \imp c\}$.
An example of optimum base is thus given by $\ib = \{a \imp b, b \imp a, f \imp \lset{ce}, \lset{ce} \imp d, \lset{de} \imp f, ad \imp e, ae \imp c\}$.
\end{example}

The situation simplifies for convex geometries.
Let us then assume that $(\U, \cs)$ is a convex geometry and let $C$ be some essential set.
Since $C$ has a unique minimal spanning set, $\ex(C)$, $C$ plus the quasi-closed sets of $C$ form a closure system.
In particular, it follows that $C$ has a unique pseudo-closed set and that each minimum implicational base of $(\U, \cs)$ has a unique implication $A \imp T$ for the equivalence class of $C$.
This makes $T$ on its own a hitting set of $\HQC(C)$ by Proposition~\ref{prop:carac-valid-qc}.
Moreover, item (2) of Theorem~\ref{thm:classic-pseudo} entails that $\ex(C)$ is the unique choice of minimal spanning set for any left-optimum implicational base of $(\U, \cs)$ (see also, e.g., \cite[Corollary 13(b)]{wild1994theory}, \cite[Theorem 7]{adaricheva2017optimum}). 
We obtain the first part of Theorem~\ref{thm:cg-optim-ib} regarding left-optimality that we \iflongelse{prove}{state} in a separate lemma:

\begin{lemma} \label{lem:cg-left-optim-ib}
Let $(\U, \cs)$ be a convex geometry.
A pair $(\U, \ib)$ where $\ib$ is a collection of valid implications of $(\U, \cs)$ is a left-optimum implicational base of $(\U, \cs)$ if and only if $\ib$ is obtained by choosing, for each essential set $C$, exactly one implication of the form $\ex(C) \imp T$ where $T$ is a hitting set of $\HQC(C)$.
\end{lemma}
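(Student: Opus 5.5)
We prove the two directions separately. The main tools are Proposition~\ref{prop:carac-valid-qc}, Theorem~\ref{thm:classic-pseudo} and Theorem~\ref{thm:optim-classes}. We also use one convex-geometry fact: every spanning set of a closed set $C$ contains $\ex(C)$, because $\ex(C)$ is the unique minimal spanning set of $C$.

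We first fix the structure of each essential set $C$. Let $Q$ be a quasi-closed set with $\cl(Q) = C$. Then $\ex(C) \subseteq Q$, and every subset $Y \subseteq C$ with $\ex(C) \subseteq Y$ spans $C$. Hence the quasi-closed sets of $C$, together with $C$ itself, are closed under intersection. Indeed, the intersection of two quasi-closed sets of $C$ still contains $\ex(C)$, so it spans $C$. It is also saturated, since $\sigma$ is a closure operator. Consequently $C$ has a unique pseudo-closed set $P_C$, namely the intersection of all quasi-closed sets of $C$, and $\ex(C) \subseteq P_C$.

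\textbf{If part.} Let $\ib$ consist of one implication $\ex(C) \imp T_C$ for each essential set $C$, with $T_C$ a hitting set of $\HQC(C)$. These implications are valid because $T_C \subseteq C = \cl(\ex(C))$. To see that $(\U,\ib)$ is a base, we check the criterion of Proposition~\ref{prop:carac-valid-qc}. Take a quasi-closed set $Q$ and put $C = \cl(Q)$. Then $\ex(C) \subseteq Q$ and $\cl(\ex(C)) = \cl(Q)$. Moreover $Q \subseteq Q'$ for some maximal quasi-closed set $Q'$ of $C$. Since $T_C$ hits $C \setminus Q'$, it contains an element of $C \setminus Q' \subseteq \cl(Q) \setminus Q$.

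For left-optimality, Theorem~\ref{thm:classic-pseudo}(2) says every left-optimum base contains, for each $P_C$, an implication whose premise spans $C$ and has minimum size. By the fact above, any spanning set of $C$ contains $\ex(C)$, so that minimum is $\card{\ex(C)}$. Premises attached to distinct pseudo-closed sets have distinct closures, so these implications are distinct. Therefore every base has left-size at least $\sum_C \card{\ex(C)}$, and our $\ib$ attains this bound.

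\textbf{Only if part.} Let $(\U,\ib)$ be left-optimum. By Theorem~\ref{thm:optim-classes}(1) it is minimum. By Theorem~\ref{thm:classic-pseudo}(1) it therefore has exactly one implication per pseudo-closed set, and hence exactly one per essential set $C$. We write this implication as $A_C \imp B_C$. By Theorem~\ref{thm:classic-pseudo}(2), $A_C$ spans $C$ with $\card{A_C} = \card{\ex(C)}$. Since $A_C \supseteq \ex(C)$, this forces $A_C = \ex(C)$.

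It remains to show that $B_C$ hits $\HQC(C)$. For any maximal quasi-closed set $Q$ of $C$, Proposition~\ref{prop:carac-valid-qc} provides an implication of $\ib$ whose premise spans $C$ and whose conclusion meets $C \setminus Q$. The only such implication is $\ex(C) \imp B_C$, so $B_C$ meets $C \setminus Q$.

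The main obstacle is the counting step: we must justify that the premises forced by Theorem~\ref{thm:classic-pseudo} are pairwise distinct and exhaust all of $\ib$. Distinctness comes from the distinct closures. Exhaustiveness comes from minimality, through Theorem~\ref{thm:optim-classes}(1).
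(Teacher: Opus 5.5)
Your proof is correct and follows the same route as the paper. For the if direction you check the criterion of Proposition~\ref{prop:carac-valid-qc}; for the only-if direction you use Theorem~\ref{thm:classic-pseudo}(2) together with the fact that $\ex(C)$ is the unique minimal spanning set to force each premise to be $\ex(C)$. You are also more careful than the paper on one point: via Theorem~\ref{thm:optim-classes}(1) and Theorem~\ref{thm:classic-pseudo}(1), you explicitly justify that a left-optimum base contains no implications beyond the single one forced per essential set, which the paper's proof takes for granted.
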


\begin{proof}
We first show the if part.
For simplicity, let $C_1, \dots, C_m$ be the essential sets of $(\U, \cs)$ and, for each $1 \leq i \leq m$, let $T_i$ be some hitting set of $\HQC(C_i)$.
Let $\ib = \{\ex(C_1) \imp T_1, \dots, \ex(C_m) \imp T_m\}$.
We prove that $(\U, \ib)$ is an implicational base of $(\U, \cs)$.
Observe that $\ib$ contains only valid implications of $(\U, \cs)$, i.e., $T_i\subseteq C_i$.
Let $Q$ be a quasi-closed set with closure $C_i$, for some $i$.
Since $(\U, \cs)$ is a convex geometry, $\ex(C_i) \subseteq Q$.
Moreover, $T_i$ contains an element of $C_i \setminus Q$ as $T_i$ is a hitting set of $\HQC(Q_i)$ by definition.
Applying this reasoning to any quasi-closed sets yields that $(\U, \ib)$ is indeed an implicational base of $(\U, \cs)$ by Proposition~\ref{prop:carac-valid-qc}.
The fact that it is left-optimal follows from items (2) of Theorem~\ref{thm:classic-pseudo} and the fact that $(\U, \cs)$ is a convex geometry.

We now prove the only if part.
Let $(\U, \ib)$ be a left-optimum base of $(\U, \cs)$.
Again using items (2) of Theorem~\ref{thm:classic-pseudo} and the fact that $(\U, \cs)$ is a convex geometry, we have that $\ib = \{\ex(C_1) \imp T_1, \dots, \ex(C_m) \imp T_m\}$ where each $T_i$ is some subset of $C_i$.
As $(\U, \ib)$ is an implicational base of $(\U, \cs)$, we have by Proposition~\ref{prop:carac-valid-qc} that for each maximal closed set $Q$ of $C_i$, $T_i$ contains an element of $C_i \setminus Q$.
It follows that $T_i$ is a hitting set of $\HQC(C_i)$ and that $\ib$ is indeed obtained by choosing exactly one implication of the form $\ex(C) \imp T$ for each essential set $C$ and some hitting set $T$ of $\HQC(C)$.
This concludes the proof.
\end{proof}

Since optimum bases are left-optimum, we deduce that optimum bases of $(\U, \cs)$ falls into the scope of Lemma~\ref{lem:cg-left-optim-ib}.
In particular, right-optimality is reached exactly when for each essential set $C$, the size of the chosen hitting set $T$ of $\HQC(C)$ is minimum.
We thus obtain Theorem~\ref{thm:cg-optim-ib}:
\THMcgoptimib*

%
%
%


For convex geometries, Theorem~\ref{thm:cg-optim-ib} shows that optimization amounts to identify a minimum hitting set $T$ of $\HQC(C)$ of each essential set $C$, possibly by explicitly computing $\HQC(C)$, and then outputting $\ex(C) \imp T$. 
This strategy will not produce polynomial time algorithms in general as optimizing convex geometries is $\NP$-hard~\cite[unpublished]{bichoupan2022complexity}.
To show hardness, the author implicitly proves that any closure system can be turned into the closure system of quasi-closed sets of an essential set $C$ of a convex geometry $(\U, \cs)$.
Since finding a minimum hitting set $T$ of $\HQC(C)$ equivalently reads as finding a minimum spanning set of $C$ in the closure system of the quasi-closed sets of $C$ being given by an implicational base, which is a hard problem~\cite{lucchesi1978candidate}, hardness of optimizing convex geometries follows.
In the next section though, we use Theorem~\ref{thm:cg-optim-ib} to identify a property of convex geometries that makes optimization tractable and show this property applies to different classes of convex geometries. 

\section{Quasi-closed hypergraphs with disjoint edges} \label{sec:Q-disjoint-CG}

Drawing from Theorem~\ref{thm:cg-optim-ib}, we investigate the case where each quasi-closed hypergraph of a convex geometry $(\U, \cs)$ has pairwise disjoint edges.
We prove that any implicational base of $(\U, \cs)$ can be optimized in polynomial time (  Theorem~\ref{thm:qc-disjoint}), and then that several classes of convex geometries do enjoy this property (Theorem~\ref{thm:cg-classes}).

For Theorem~\ref{thm:qc-disjoint} we simply argue that, under our assumptions, any polynomial time algorithm that takes as input an implicational base $(\U, \ib)$ of $(\U, \cs)$ and turns it into an equivalent minimum reduced base does in fact make it optimum.
Minimization and reduction can be achieved in polynomial time by applying, say, Shock's algorithm for minimization~\cite{shock1986computing} and then Maier's algorithm for reduction~\cite[Algorithm 5.7, p. 77]{maier1983theory} which first applies left-reduction and then right-reduction.
Informally, assuming $(\U, \ib)$ is already minimum and considering some implication $A \imp T$, left-reduction will turn $A$ into $\ex(C)$ where $C$ is the essential set spanned by $A$ while right-reduction will turn $T$ into a minimal hitting set of $\HQC(C)$ by Theorem~\ref{thm:cg-optim-ib}.
Given that the edges of $\HQC(C)$ are disjoint this minimal hitting set is also minimum, which guarantees optimality again by Theorem~\ref{thm:cg-optim-ib}.

\THMqcdisjoint*

\begin{proof}
Without loss of generality, let $(\U, \ib)$ be a minimum implicational base $(\U, \cs$).
Since $(\U, \cs)$ is a convex geometry, each closed set has a unique minimal spanning set consisting of its extreme points.
Also recall that since $(\U, \cs)$ is a convex geometry and $(\U, \ib)$ a minimum implicational base of $(\U, \cs)$, each implication $A \imp T$ of $\ib$ corresponds to a unique essential set $C$ such that $\cl(A) = C$.
Therefore, applying left-reduction on $(\U, \ib)$ will turn each premise $A$ of $\ib$ into $\ex(C)$ where $C = \cl(A)$ is the essential set spanned by $A$.
Hence, left-reduction turns $(\U, \ib)$ into a left-optimum implicational base of $(\U, \cs)$.
Now we consider right-reduction.
Let $C$ be some essential set with corresponding implication $\ex(C) \imp T$ in $\ib$.
By Theorem~\ref{thm:cg-optim-ib}, right-reducing $\ex(C) \imp T$ will thus produce an implication $\ex(C) \imp T'$ where $T'$ is a hitting set of $\HQC(C)$ but none of its subsets are, i.e., where $T'$ is a minimal hitting set of $\HQC(C)$.
Since the edges of $\HQC(C)$ are disjoint, any of its minimal hitting set is in fact minimum.
Thus, reducing $(\U, \ib)$ does indeed make $(\U, \ib)$ optimum by Theorem~\ref{thm:cg-optim-ib}.
The fact that minimization and reduction can be achieved in polynomial time follows from previous discussion.
This proves the Theorem.
\end{proof}

We finally identify four classes of convex geometries to which Theorem~\ref{thm:cg-optim-ib} applies. 
Let us recall that the first three of them have been independently shown to have tractable optimization~\cite{adaricheva2017optimum,hammer1995quasi,nakamura2013prime}. 
Our approach thus unifies all these results via the same optimization algorithm.

\THMcgclasses*

The proof of Theorem~\ref{thm:cg-classes} is divided in subsections~\ref{subsec:posets-cg} to~\ref{subsec:q-acc-cg}, each subsection corresponding to one of the class mentioned in the Theorem.
\iflongelse{Let us mention though that there are also convex geometries with quasi-closed hypergraphs that have non-disjoint edges as Figure~\ref{fig:gc-non-disjoint} shows.

\begin{figure}[ht!]
    \centering
    \includegraphics[width=0.8\linewidth]{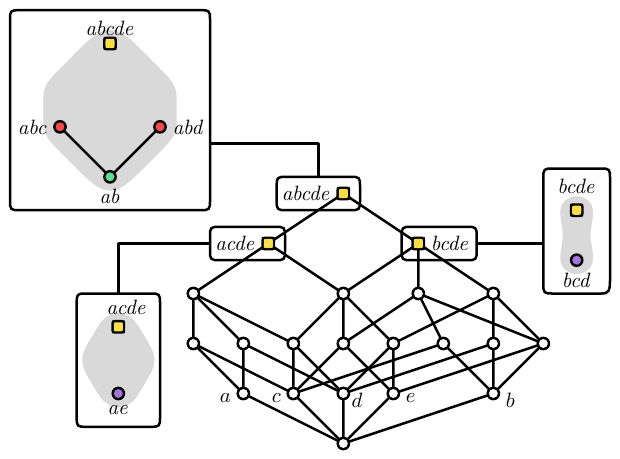}
    \caption{A convex geometry with $3$ essential sets being $\lset{acde}$, $\lset{bcde}$ and $\lset{abcde}$ (yellow square nodes).
    Some closed sets are unlabeled for readability.
    Each essential set has an associated box representing its spanning sets (shaded zone) and its quasi-closed sets (circle nodes). 
    Minimal quasi-closed sets (e.g., $ab$) are green, maximal ones (e.g., $abd$) are red and those that are both minimal and maximal (e.g., $ae$) are purple.
    The canonical base has implications $\lset{ae} \imp \lset{cd}$, $\lset{bcd} \imp \lset{e}$ and $\lset{ab} \imp \lset{cde}$.
    We have $\HQC(\mathit{abcde}) = \{de, ce\}$ where $de$ and $ce$ are not disjoint.}
    \label{fig:gc-non-disjoint}
\end{figure}}{}

\subsection{Double-shelling convex geometries} \label{subsec:posets-cg}

We prove that any convex geometry in the class of double-shelling convex geometries have quasi-closed hypergraphs with disjoint edges.
Let $P := (\U, \leq)$ be a poset and let $x, y \in \U$.
We write $x < y$ to denote that $x \leq y$ and $x \neq y$. 
If $x < y$, the \emph{interval} of $x, y$ is defined as $[x, y] := \{z \st z \in \U, x \leq z \leq y\}$. 
A set $C \subseteq \U$ is called a \emph{convex set} of $P$ if for any $x, y \in C$ such that $x < y$, $[x, y] \subseteq C$ holds.
Let $\cs$ be the family of convex subsets of $P$.
The pair $\mathrm{Co}(P) := (\U, \cs)$ is known to be a convex geometry called the \emph{double-shelling convex geometry} of $P$.
Its canonical base $(\U, \ib_c)$ is given by~\cite{adaricheva2017optimum}:
\[ 
\ib_c = \{xy \imp [x, y] \setminus \{x, y\} \st \exists z\in \U, x < z < y \text{ in } P \}.
\] 
Observe that for any essential set $C$ with pseudo-closed set $xy$, we have $\ex(C) = \{x, y\}$ as any singleton is convex.
The \emph{comparability graph} of $P$ is the undirected graph $G_P$ with vertex set $V(G_P) := \U$ and edges $E(G_P) := \{xy \st x < y\}$.
For $x < y$, we define $G_P(x, y) := G_P[[x, y] \setminus \{x, y\}]$ where $G_P[Y]$ is the graph induced by $Y$.
We illustrate these definitions in Example~\ref{ex:poset-cg}.

\begin{example} \label{ex:poset-cg}
Consider the poset $P$ on ground set $\U = \{\lset{a}, \dots, \lset{h},\lset{x},\lset{y}\}$ on the left of Figure~\ref{fig:poset-cg}. 
For instance, $[\lset{a},\lset{y}] = \{a, f, g, y\}$.
The canonical base $(\U, \ib_c)$ of the associated double-shelling convex geometry $\mathrm{Co}(P)$ is given by:
\[ 
\ib_c = \left\{\begin{array}{lllll}
\mathit{xf} \imp \mathit{ab}, & \mathit{xg} \imp \mathit{ab}, & \mathit{xe} \imp c, & \mathit{xh} \imp \mathit{ec}, & \mathit{xy} \imp \mathit{abcdefgh}, \\
\mathit{ay} \imp \mathit{fg}, & \mathit{by} \imp \mathit{fg}, & \mathit{cy} \imp \mathit{eh}, & \mathit{ey} \imp \mathit{h}, & \mathit{ch} \imp e
\end{array}
\right\} 
\] 

\begin{figure}[ht!]
\centering
\includegraphics[page=1, width=\linewidth]{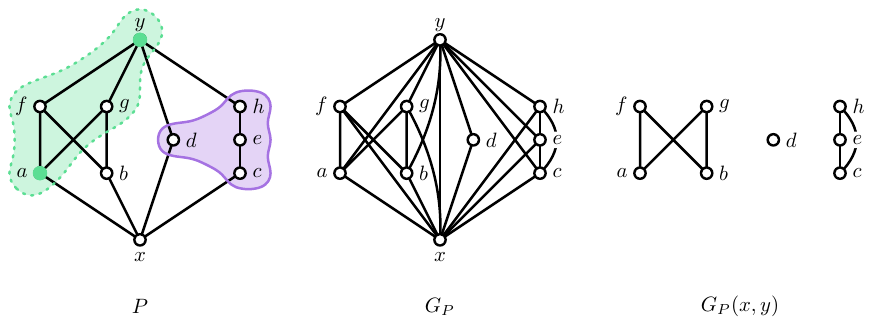}
\caption{On the left the poset $P$ of Example~\ref{ex:poset-cg}.
The set $\mathit{cdeh}$ (circled in purple, solid) is convex.
The interval $[a, y]$ (circled in green, dotted) is $\mathit{afgy}$.
It is also convex.
The comparability graph $G_P$ is pictured in the middle and the graph $G_P(x, y)$ on the right.
It has three connected component: $\mathit{abgf}$, $d$, and $\mathit{ceh}$.}
\label{fig:poset-cg}
\end{figure}
\end{example}

We prove below that for any essential set $C$ with extreme points $x, y$, $\HQC(C)$ consists in the connected components of $G_P(x, y)$ (assuming without loss of generality $x < y$).
While this  can be deduced from \cite[Lemma 26]{adaricheva2017optimum}, \iflongelse{we give a proof in our framework based on quasi-closed sets for self-containment.}
{Lemma \ref{lem:posets-qc} provide this  in our framework based on quasi-closed sets for self-containment.} 

\begin{lemma} \label{lem:posets-qc}
Let $C$ be an essential set of $\cs$ with $\ex(C) = \{x, y\}$ and $x < y$ in $P$.
Then, the edges of $\HQC(C)$ are the connected components of $G_P(x, y)$, therefor they are disjoint.
\end{lemma}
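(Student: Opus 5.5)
First I identify $C$ and its quasi-closed sets. Since $\ex(C) = \{x,y\}$ and $C$ is the closure of $xy$, we get $C = [x,y]$: the interval is convex, and any convex set containing $x<y$ contains it. Write $I := [x,y]\setminus\{x,y\}$. A set $Q$ with $\cl(Q) = C$ must contain $\ex(C) = \{x,y\}$, so $Q = \{x,y\}\cup Z$ with $Z \subset I$; $Z\neq I$ because $Q$ is not closed. The plan is to characterize quasi-closedness of such a $Q$ in terms of $G_P(x,y)$, and then read off the maximal ones.

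\textbf{Claim:} $Q = \{x,y\} \cup Z$ with $Z \subsetneq I$ is quasi-closed if and only if $Z$ is a union of connected components of $G_P(x,y)$.

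For the ``if'' direction, let $A \subseteq Q$ with $\cl(A) \subset C$. Then $A$ does not contain both $x$ and $y$. Take $u<v$ in $A$ and $w \in [u,v]$; I must show $w\in Q$. If $w\in\{x,y\}$ this is immediate. Otherwise $w \in I$, and at least one of $u,v$ lies in $Z\subseteq I$. Say $u \in Z$ (the case $v\in Z$ is symmetric). Since $u \le w$ with $u,w\in I$, either $u=w$ or $uw$ is an edge of $G_P(x,y)$. Because $Z$ is a union of components, $w\in Z$. Hence the closure of $A$, obtained by iterating intervals of comparable pairs, stays inside $Q$; equivalently, $Q\setminus\{x\}$ and $Q\setminus\{y\}$ are convex and $\cl(A)$ lies in one of them. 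For the ``only if'' direction, suppose $u\in Z$ and $w\in I\setminus Z$ are adjacent, say $u<w$. Take $A=\{x,u,w\}\cap$ what lies in $Q$, namely $A = \{u,y\}$. Then $w\in[u,y]$, and $\cl(\{u,y\}) = [u,y]$ does not contain $x$ since $u\neq x$. So $\cl(A) \subset C$, yet $w\in \cl(A)\setminus Q$, contradicting quasi-closedness. The case $w<u$ is symmetric, using $\{x,u\}$ with $w\in[x,u]$.

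Given the claim, the maximal quasi-closed sets are exactly $C\setminus K$ for $K$ a single connected component of $G_P(x,y)$. Any proper union of components is contained in the union of all components but one, and $I$ itself is excluded. This needs $I\neq\emptyset$, which holds because $C$ is essential, so $xy$ is non-closed. Hence the edges of $\HQC(C)$ are precisely the connected components, which are pairwise disjoint.

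The main obstacle is the ``if'' direction: the argument must control $\cl(A)$ for arbitrary $A$, not just intervals of pairs. I handle this with the convexity observation that $(\{x,y\}\cup Z)\setminus\{x\}$ is convex whenever $Z$ is a union of components. The same pair argument proves this: a violating interval would have to pass from $Z$ to $I\setminus Z$ along a comparability, which is impossible. The case $y$ removed is symmetric.
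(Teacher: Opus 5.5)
Your proof is correct and follows the same plan as the paper: you characterize the quasi-closed spanning sets of $C=[x,y]$ as $\{x,y\}\cup Z$ with $Z$ a union of a proper subset of the components of $G_P(x,y)$, and your ``only if'' witness ($\{x,u\}$ or $\{u,y\}$) is the paper's. The one difference is the ``if'' direction: you check the definition directly through the convexity of $Q\setminus\{x\}$ and $Q\setminus\{y\}$, whereas the paper invokes Wild's saturation lemma to reduce to the canonical-base implications $uv\imp[u,v]\setminus\{u,v\}$, so your version is self-contained (but the garbled phrase before ``namely $A=\{u,y\}$'' should simply be replaced by that choice of $A$).
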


\begin{proof}
To prove the claim, it is sufficient to show that a set $\{x, y\} \subseteq Q \subseteq C$ is a quasi-closed set of $C$ if and only if it is of the form $Z \cup \{x, y\}$ where $Z$ is a union of a proper subset of the connected components of $G_P(x, y)$.

We prove the if part.
Assume that $Q = \{x, y\} \cup Z$ where $Z$ is a union of a union of a proper subset of the connected components of $G_P(x, y)$.
Following Lemma~\ref{lem:wild-saturation-poly}, to show that $Q$ is quasi-closed, we need only show that $Q$ satisfies all the implications $uv \imp [u, v] \setminus \{u, v\}$ of $\ib_c$ where $\cl(uv) \subset \cl(xy)$.
We have two cases: $\{u, v\} \cap \{x, y\} = \emptyset$ or $\{u, v\} \cap \{x, v\} \neq \emptyset$ in which case the intersection contains exactly one element.
We consider the first case.
The assumption $\cl(uv) = [u, v] \subset [x, y] = \cl(xy)$ entails $u, v \in [x, y]$.
Moreover, there exists a connected component $D$ of $G_P(x, y)$ that includes $[u, v]$ as any element of $[u, v]$ is comparable to $u$ and $v$.
Since $u, v \in Q$ and $Z$ is a union of connected components of $G_P(x, y)$, we deduce $D \subseteq Q$ and hence $[u, v] \subseteq Q$.
Hence, $Q$ satisfies the implication $uv \imp [u, v] \setminus \{u, v\}$.
We turn to the second case.
Let us assume $u = x$ and $y \neq v$.
Then, any element in $[x, v]$ is in the connected component induced by $v$ and a similar argument shows that $Q$ satisfies the implication $xv \imp [x, v] \setminus \{x, v\}$.
We deduce by Lemma~\ref{lem:wild-saturation-poly} that $Q$ is indeed quasi-closed.

We turn to the only if part, which we show using contrapositive.
Let $\{x, y\} \subseteq Y \subseteq C$ be such that there exists a connected component $D$ of $G_P(x, y)$ satisfying $Y \cap D \neq \emptyset$ but $D \nsubseteq Y$.
In particular, in $D$ there exist adjacent comparable vertices $u$ and $v$ such that $u \in Y \cap D$ and $v \notin Y$.
Then, we have either $v \in [x, u]$ or $v \in [u, y]$.
Without loss of generality assume $v \in [x, u]$.
Since $[x, u] \subset [x, y]$ by definition, we deduce that $\{u, x\} \subseteq Y$, $\cl(xu) \subset \cl(xy) = \cl(Y)$ but $\cl(xu) \nsubseteq Y$ as $v \notin Y$.
Therefore, $Y$ is not quasi-closed, which concludes the proof.
\end{proof}

\begin{example} \label{ex:poset-cg-optim}
We continue Example~\ref{ex:poset-cg}.
According to Lemma~\ref{lem:posets-qc} the edges of $\HQC(\U)$ are exactly the connected components of $G_P(x, y)$: $\lset{abgf}$, $d$, and $\lset{ceh}$ (see Figure~\ref{fig:poset-cg}).
\iflongelse{
They are illustrated in Figure~\ref{fig:poset-cg-optim}, as well as the corresponding inclusion-wise maximal quasi-closed sets that are thus obtained by taking all connected components of $G_P(x, y)$ but one.
The set $\lset{ade}$, also highlighted in the figure, is a minimal (hence minimum) hitting set of $\HQC(\U)$.}
{The set $\lset{ade}$ is a minimal (hence minimum) hitting set of $\HQC(\U)$.}
This makes the implication $\lset{xy} \imp \lset{ade}$ optimum for this equivalence class.
An example of optimum base $(\U, \ib)$ for $\mathrm{Co}(P)$ is given by:
\[ 
\ib = \left\{\begin{array}{lllll}
\mathit{xf} \imp \mathit{ab}, & \mathit{xg} \imp \mathit{ab}, & \mathit{xe} \imp c, & \mathit{xh} \imp \mathit{c}, & \mathit{xy} \imp \mathit{ade}, \\
\mathit{ay} \imp \mathit{fg}, & \mathit{by} \imp \mathit{fg}, & \mathit{cy} \imp \mathit{e}, & \mathit{ey} \imp \mathit{h}, & \mathit{ch} \imp e
\end{array}
\right\}
\]
\begin{figure}[ht!]
\centering 
\includegraphics[page=2, width=\linewidth]{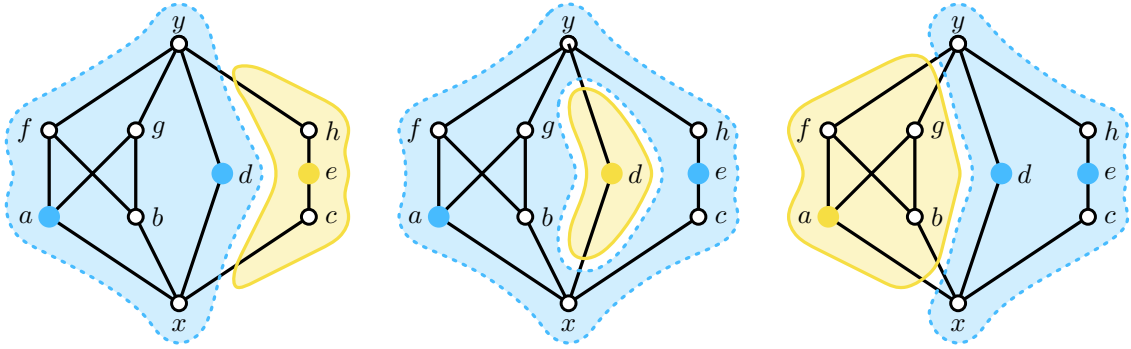}
\caption{
The 3 edges of of the quasi-closed hypergraph associated to $\U$ in the double-shelling convex geometry of Example~\ref{ex:poset-cg} (solid yellow zones).
The dotted blue zones thus indicate the corresponding quasi-closed sets.
The set $\mathit{ade}$ (bold nodes) form a minimum hitting set of this hypergraph.
}
\label{fig:poset-cg-optim}
\end{figure}
\end{example}

Hence the edges of $\HQC(C)$ are indeed disjoint for any essential set $C$.
More precisely, an optimum implication for $C$ is obtained by picking one element in each connected component of the corresponding graph, as proved by Adaricheva~\cite[Lemma 26]{adaricheva2017optimum}.
This proves item (1) of Theorem~\ref{thm:cg-classes}.

\subsection{Acyclic convex geometries} \label{subsec:acyclic-cg}

We turn our attention to acyclic convex geometries.
Let $(\U, \ib)$ be an implicational base.
The implication-graph of $(\U, \ib)$ is the directed graph with vertex set $\U$ and arcs $(a, b)$ if there exists $A \imp B \in \ib$ such that $a \in A, b \in B$.
We consider without loss of generality that $A \cap B = \emptyset$ to avoid loops.
We say that $(\U, \ib)$ is \emph{acyclic} if its implication-graph does not have directed cycles.
An acyclic closure system is a closure system admitting an acyclic implicational base and acyclic closure systems are convex geometries, see e.g.,\cite{wild1994theory,wild2017joy}.
\iflongelse{Below we recall that acyclic convex geometries are characterized by an acyclic $\delta$ relation. 
The $\delta$ relation of a closure system $(\U, \cs)$ is the binary relation over $\U$ such that $b \delta a$ holds if there is a minimal generator $A$ of $b$ that contains $a$.

\begin{lemma}[see, e.g., {\cite[Lemma 5.2]{hammer1995quasi}}] \label{lem:acyclic-delta}
A closure system $(\U, \cs)$ is an acyclic convex geometry if and only if its $\delta$ relation is acyclic.
\end{lemma}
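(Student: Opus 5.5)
The plan is to prove the two directions separately, each time comparing the $\delta$ relation with the implication-graph of a suitable implicational base. The bridge between them is one observation: for an implicational base $(\U, \ib)$ of $(\U, \cs)$, if $b \delta a$ then there is a directed path of positive length from $a$ to $b$ in the implication-graph of $(\U, \ib)$.

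\emph{Only if part.} Let $(\U, \ib)$ be an acyclic implicational base of $(\U, \cs)$, and suppose $b \delta a$ via a minimal generator $A$ of $b$ with $a \in A$. Let $R$ be the set of vertices reachable from $a$ in the implication-graph, including $a$ itself. I claim that $\ib(A) \setminus R \subseteq \ib(A \setminus \{a\})$, and I would prove it by induction along the forward chaining sequence $A = Y_0 \subseteq Y_1 \subseteq \dots$.
\begin{itemize}
\item Base case: $Y_0 \setminus R = A \setminus R \subseteq A \setminus \{a\}$.
\item Inductive step: suppose $d \in Y_i \setminus R$ is added by an implication $D \imp E$ with $D \subseteq Y_{i-1}$ and $d \in E$. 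No element of $D$ lies in $R$, since otherwise $d$ would be reachable from $a$. Hence $D \subseteq Y_{i-1} \setminus R \subseteq \ib(A \setminus \{a\})$ by induction, and so $d \in \ib(A \setminus \{a\})$.
\end{itemize}
Now $b \in \cl(A) = \ib(A)$ and, by minimality of $A$, $b \notin \cl(A \setminus \{a\})$. The claim therefore forces $b \in R$. Since $b \notin A$, we have $b \neq a$, so the path from $a$ to $b$ has positive length. Consequently, a directed cycle of $\delta$ would concatenate into a closed walk of positive length in the implication-graph, which contains a directed cycle. This is a contradiction, so $\delta$ is acyclic.

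\emph{If part.} Assume $\delta$ is acyclic, and take $\ib_\delta = \{A \imp \{b\} \st A \text{ is a minimal generator of } b\}$. I would first check that $(\U, \ib_\delta)$ is an implicational base of $(\U, \cs)$.
\begin{itemize}
\item Its implications are valid, since $b \in \cl(A)$ for each of them.
\item If $Y$ is not closed, pick $b \in \cl(Y) \setminus Y$. Then $Y$ itself is a generator of $b$, so some $A \subseteq Y$ is a minimal generator of $b$. Thus $Y$ violates $A \imp \{b\}$.
\end{itemize}
So the sets satisfying $\ib_\delta$ are exactly the closed sets. By construction, the implication-graph of $\ib_\delta$ has an arc $(a, b)$ precisely when $b \delta a$, so it is acyclic. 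Hence $(\U, \cs)$ admits an acyclic implicational base, and it is a convex geometry because acyclic closure systems are convex geometries, as recalled above.

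The main obstacle is the reachability claim in the only if part, namely getting the induction over forward chaining right so that elements outside $R$ are derivable without $a$. The other steps are routine verifications.
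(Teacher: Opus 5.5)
The paper does not prove this lemma; it only cites Hammer and Kogan, so there is no in-paper argument to compare against. Your proof is correct, self-contained, and the natural one. For the \emph{if} part, $\delta$ is exactly the implication graph of the base $\ib_\delta$ of all minimal generators. For the other direction, your forward-chaining induction shows that for \emph{every} implicational base $\ib$ of $(\U,\cs)$, $b \,\delta\, a$ yields a directed path of positive length from $a$ to $b$ in the implication graph of $\ib$. In other words, $\delta$ lies in the transitive closure of the implication graph of any base, which is slightly more than the lemma needs. The induction is sound: in the inductive step $d \notin Y_{i-1} \supseteq D$, so each $e \in D$ genuinely gives an arc $(e,d)$, and the case $d \in Y_{i-1}$ is immediate.

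One caveat concerns the final step of the \emph{if} part. The paper's closure systems need not have $\emptyset$ closed. The recalled fact ``acyclic closure systems are convex geometries'' that you invoke holds only when $\emptyset \in \cs$. For instance, take $\U = \{a\}$ and $\cs = \{\{a\}\}$. The only minimal generator of $a$ is $\emptyset$, so $\delta$ is empty and $\ib_\delta = \{\emptyset \imp a\}$ is acyclic. Yet $(\U,\cs)$ is not a convex geometry. So the lemma as literally stated, and your proof, need the implicit assumption $\emptyset \in \cs$.

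Under that assumption, you can make the last step self-contained instead of citing it. Fix a closed $C \subset \U$. Let $x$ be the last element of $\U \setminus C$ in a linear order in which every arc of the implication graph of $\ib_\delta$ goes forward. Consider an implication $A \imp \{b\}$ with $A \subseteq C \cup \{x\}$. If $x \in A$, then $(x,b)$ is an arc, so $b$ comes after $x$ and hence $b \in C$. If $A \subseteq C$, then $b \in C$ because $C$ is closed. Thus $C \cup \{x\}$ satisfies $\ib_\delta$ and is closed.
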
}{}

Suppose that $(\U, \cs)$ is an acyclic convex geometry and let $C$ be essential.  
We show in the next lemma that each maximal quasi-closed set of $C$ is of the form $C \setminus \{x\}$.
This statement can be deduced from the connection between essential implicates, critical generators and quasi-closed sets drawn in earlier literature.
In \cite{hammer1995quasi}, essential implicates are those implications $A \imp x$ that belong to every possible implicational base with singleton conclusions of a given closure system $(\U, \cs)$.
The authors prove that if $(\U, \cs)$ is an acyclic convex geometry, essential implicates do in fact constitute the unique optimum base of $(\U, \cs)$ (once aggregated by premises).
Then, Nourine et al.~\cite{defrain2021translating} observe that the essential implicates of an acyclic convex geometry coincide with the so-called critical generators of the convex geometry.
In a convex geometry $(\U, \cs)$, a minimal generator $A$ of $x$ is \emph{critical} if $x \in \ex(\cl(A) \setminus \{a\})$ for every $a \in A$~\cite{korte2012greedoids}.
Finally, Wild~\cite{wild1994theory,wild2017joy} noticed that $A$ is a critical generator of $x$ if and only if $\cl(A) \setminus \{x\}$ is quasi-closed which allows to derive the lemma below.
We still provide an alternative proof based on our approach of using maximal quasi-closed sets and that only uses the $\delta$ relation.

\begin{lemma} \label{lem:acyclic-qc}
The maximal quasi-closed sets of an essential set $C$ are of the form $C \setminus \{x\}$ with $x \in C$.
\end{lemma}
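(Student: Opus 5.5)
Throughout, $(\U, \cs)$ is an acyclic convex geometry, and we use the characterization of Lemma~\ref{lem:acyclic-delta} that its $\delta$ relation is acyclic. The plan is to show that every quasi-closed set $Q$ of $C$ misses exactly one element of $C$, or at least that $Q \subseteq C \setminus \{x\}$ for some $x$ with $C\setminus\{x\}$ quasi-closed. Concretely, fix a maximal quasi-closed set $Q$ of $C$ (so $\ex(C) \subseteq Q \subset C$ and $\cl(Q)=C$). We aim to show $|C \setminus Q| = 1$. Suppose instead that $C \setminus Q$ contains at least two elements. We will produce a strictly larger quasi-closed set $Q \cup \{z\}$, contradicting maximality.

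\textbf{Choosing $z$.} We pick $z$ to be an element of $C \setminus Q$ that is extremal for $\delta$ restricted to $C\setminus Q$. The intended choice is $z$ minimal in the sense that no $w \in C \setminus Q$ satisfies $w \,\delta\, z$, i.e.\ $z$ is never used in a minimal generator of another missing element. Acyclicity of $\delta$ guarantees that such a $z$ exists. Set $Q' := Q \cup \{z\}$. Then $Q'$ is not closed: otherwise $Q'$ would be a closed set containing $Q$ and spanning $C$, so $Q' = C$, contradicting $|C\setminus Q|\ge 2$.

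\textbf{Verifying that $Q'$ is quasi-closed.} By Lemma~\ref{lem:wild-saturation-poly}, or directly from the definition, we must check the following: for every $A \subseteq Q'$ with $\cl(A) \subset C$ and every $w \in \cl(A)$, we have $w \in Q'$. Suppose some $w \in C\setminus Q'$ lies in $\cl(A)$. Take a minimal generator $A_0 \subseteq A$ of $w$. If $z \notin A_0$, then $A_0 \subseteq Q$ and $\cl(A_0) \subseteq \cl(A) \subset C$, so quasi-closedness of $Q$ gives $w \in Q$, a contradiction. Hence $z \in A_0$, which means $w \,\delta\, z$ with $w \in C\setminus Q$. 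This contradicts the choice of $z$. So $Q'$ is quasi-closed, contradicting the maximality of $Q$. Therefore maximal quasi-closed sets have the form $C\setminus\{x\}$.

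\textbf{Main obstacle.} The hardest step is arguing that the extremal $z$ exists and that the minimal-generator reduction is legitimate. Acyclicity of $\delta$ restricted to the finite set $C\setminus Q$ yields an element with no incoming $\delta$-arc from the other missing elements, so $z$ exists. For the reduction, a minimal generator of $w$ exists inside any generator, and $w\notin A$ holds because $w \notin Q'$. One subtlety is whether $\delta$ must be read as "$b\,\delta\, a$ iff $a$ lies in some minimal generator of $b$." This is the paper's definition, so the argument goes through as stated.
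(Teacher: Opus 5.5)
Your proof is correct and is essentially the paper's argument in contrapositive form. The paper uses maximality to show that each $Q \cup \{x_i\}$ fails to be quasi-closed, which forces a $\delta$-arc from another missing element into $x_i$ and hence a $\delta$-cycle. You instead pick a $\delta$-source $z$ among the missing elements and show directly, by the same minimal-generator step, that $Q \cup \{z\}$ is quasi-closed, contradicting maximality.
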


\begin{proof}
Let $C$ be an essential closed set of $(\U, \cs)$ and assume for contradiction $C$ admits a maximal quasi-closed set $Q := C \setminus \{x_1, \dots, x_k\}$ for some $k > 1$.
Consider some $x_i$, $1 \leq i \leq k$.
By assumption, $Q \cup \{x_i\}$ is no longer quasi-closed.
Therefore, there exists $Y \subseteq Q \cup \{x_i\}$ such that $\cl(Y) \subset C$ and $\cl(Y) \nsubseteq Q$. 
In particular, there exists $x_j \neq x_i$ such that $x_j \in \cl(Y) \setminus Q$.
Let $A$ be a minimal generator of $x_j$ included in $Y$.
Since $Q$ is quasi-closed and $\cl(A) \subseteq \cl(Y)$, $A \subseteq Q$ would entail $\cl(A) \subseteq Q$, which contradicts $x_j \notin Q$.
We deduce that $x_i \in A$ and hence that $x_j \delta x_i$ holds.
Applying this reasoning to any $x_i$, $1 \leq i \leq k$, we deduce that each $x_i$ satisfies $x_i \delta x_j$ for some $x_j \neq x_i$, hence that the relation $\delta$ restricted to $x_1, \dots, x_n$ contains a cycle.
This contradicts the acyclicity of $(\U, \cs)$ by Theorem~\ref{lem:acyclic-delta}, and concludes the proof.
\end{proof}

The quasi-closed hypergraphs of $(\U, \cs)$ thus consist of singleton edges that are trivially disjoint, which proves item (2) of Theorem~\ref{thm:cg-classes}.
Note that in particular, each quasi-closed hypergraphs has a unique minimum hitting set.
This entails that $(\U, \cs)$ does indeed have a unique optimum base as showed in~\cite{hammer1995quasi}.

\subsection{Affine convex geometries} \label{subsec:affine-cg}

We now consider affine convex geometries, i.e., convex geometries arising from point configurations in $\mathbb{R}^d$~\cite{edelman1985theory}.
If $\U$ is a set of points in $\mathbb{R}^d$, let $\cl(Y) = h(Y) \cap \U$ where $h$ is the usual convex hull operator in geometry.
The operator $\cl$ is a closure operator and the associated closure system $(\U, \cs)$ is a convex geometry called the \emph{affine convex geometry} associated to $\U$.
Let $(\U, \cs)$ be an affine geometry.
To prove item (3) of Theorem~\ref{thm:cg-classes}, we first recall a Theorem of Kashiwabara and Nakamura~\cite{nakamura2013prime}, rephrased in our terminology.

\begin{theorem}[Theorem 20 in \cite{nakamura2013prime}] \label{thm:nakamura-affine}
Let $P_1, \dots, P_m$ be the pseudo-closed sets of $(\U, \cs)$, and for each $1 \leq i \leq m$, let $x_i$ be any element of $\cl(P_i) \setminus P_i$.
Then, the pair $(\U, \ib)$ with $\ib = \{P_1 \imp x_1, \dots, P_m \imp x_m\}$ is an optimum implicational base of $(\U, \cs)$.
\end{theorem}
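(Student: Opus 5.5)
The plan is to derive the statement directly from Theorem~\ref{thm:cg-optim-ib}, by showing that in an affine convex geometry every essential set $C$ has exactly one quasi-closed set, namely $\ex(C)$. If this holds, then the unique pseudo-closed set of $C$ is $P = \ex(C)$. Moreover, $\HQC(C)$ then has the single edge $C \setminus \ex(C) = \cl(P)\setminus P$, which is nonempty because $P$ is not closed. Any singleton $\{x\}$ with $x \in \cl(P) \setminus P$ is therefore a minimum hitting set of $\HQC(C)$. Theorem~\ref{thm:cg-optim-ib} then says that choosing $P_i = \ex(C_i) \imp x_i$ for each essential set $C_i$ gives an optimum base, which is the claim. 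As a by-product, every quasi-closed hypergraph has one edge, so item (3) of Theorem~\ref{thm:cg-classes} follows immediately.

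The core step is the geometric claim. Let $Q$ be a quasi-closed set with $\cl(Q) = C$. Since we are in a convex geometry, $\ex(C) \subseteq Q$. Assume for contradiction that there is some $y \in Q \setminus \ex(C)$. Because $Q$ is not closed, there is also some $z \in C \setminus Q$. I will exhibit $e \in \ex(C)$ such that, with $Y := (\ex(C)\setminus\{e\}) \cup \{y\} \subseteq Q$:
\begin{itemize}
\item $z \in \cl(Y)$, and
\item $\cl(Y) \subset C$.
\end{itemize}
This contradicts quasi-closedness of $Q$, since then $\cl(Y) \nsubseteq Q$. The second condition is automatic: $Y \subseteq C \setminus \{e\}$ and $e \notin \cl(C\setminus\{e\})$ because $e$ is extreme, so $e \notin \cl(Y)$.

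For the first condition I would use a standard cone decomposition of the polytope $h(\ex(C))$ from an interior-or-boundary point $y$. Work inside the affine hull of $\ex(C)$, so that the polytope is full-dimensional there. If $z = y$ the claim is trivial, so assume $z \neq y$. Take the ray from $y$ through $z$; it leaves the polytope at a point $w$ on some proper face $F$, and then $z \in h(\{y, w\})$. The point $w$ lies in the convex hull of the vertices of $F$. Since $F$ is a proper face of a full-dimensional polytope, it misses at least one vertex $e \in \ex(C)$. Hence $z \in h((\ex(C)\setminus\{e\}) \cup \{y\})$, and intersecting with $\U$ gives $z \in \cl(Y)$.

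The main obstacle is making this decomposition rigorous in the degenerate cases. One case is $y$ lying on the boundary of $h(\ex(C))$, where the ray could leave immediately; then $w = y$ and one just takes a face containing $y$ and $z$ appropriately. The other is the need to pass to the affine hull so that "proper face misses a vertex" holds. I would handle both by a Carath\'eodory-style argument if the face argument gets messy: write $z$ as a convex combination of $\ex(C)$ and $y$ as well, then shift weight along $z - t(y - z)$ until some vertex coefficient vanishes.
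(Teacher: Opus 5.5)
Your proof is correct, but it runs in the opposite direction from the paper. The paper does not prove this statement at all. It imports it from Kashiwabara and Nakamura and then combines it with Theorem~\ref{thm:cg-optim-ib} to \emph{deduce} Lemma~\ref{lem:affine-qc}: every singleton of $C\setminus\ex(C)$ must hit $\HQC(C)$, which forces $C\setminus\ex(C)$ to be the only edge. You instead prove the content of Lemma~\ref{lem:affine-qc} directly, in the sharper form that $\ex(C)$ is the only quasi-closed set of an essential set $C$, by an elementary covering argument. You then read the statement off Theorem~\ref{thm:cg-optim-ib}. This is not circular, since Theorem~\ref{thm:cg-optim-ib} rests only on general facts about closure systems.

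What your route buys is self-containment: Subsection~\ref{subsec:affine-cg} no longer depends on the external result. It also shows that, for an arbitrary convex geometry, the conclusion of the statement is equivalent to each essential set having $\ex(C)$ as its unique quasi-closed set. The paper's route is shorter but leans entirely on the citation.

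Two small remarks on the geometric step. First, the degenerate cases you worry about do not arise. You always have $z\neq y$, because $y\in Q$ and $z\notin Q$. Also $z\in h(\ex(C))$, where $h(C)=h(\ex(C))$ follows from $\cl(\ex(C))=C$, so the exit point of the ray lies at or beyond $z$ and is never $y$. Second, the weight-shifting version is the cleanest to write down, and it needs neither faces nor affine hulls. Write $z=\sum_v\lambda_v v$ and $y=\sum_v\mu_v v$ over $v\in\ex(C)$. The coefficients $(1+t)\lambda_v-t\mu_v$ of $(1+t)z-ty$ sum to $1$, and some $\mu_v>\lambda_v$ because $y\neq z$. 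Let $t_0\geq 0$ be the first time one of these coefficients vanishes, say that of $e$. Then $w=(1+t_0)z-t_0y\in h(\ex(C)\setminus\{e\})$ and $z=\frac{1}{1+t_0}w+\frac{t_0}{1+t_0}y$, which gives $z\in h((\ex(C)\setminus\{e\})\cup\{y\})$ as you need.
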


From Theorem~\ref{thm:nakamura-affine} and Theorem~\ref{thm:cg-optim-ib} we deduce that for each essential set $C$ and each $x \in C \setminus \ex(C)$, $x$ is a hitting set of $\HQC(C)$.
We obtain:

\begin{lemma} \label{lem:affine-qc}
The unique maximal quasi-closed set of any essential set $C$ is $\ex(C)$.
\end{lemma}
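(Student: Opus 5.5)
Since $(\U, \cs)$ is a convex geometry, every quasi-closed set $Q$ of $C$ is a spanning set of $C$, so $\ex(C) \subseteq Q \subset C$. Once we show that $\ex(C)$ is itself quasi-closed, it follows at once that $\ex(C)$ is the unique maximal quasi-closed set of $C$. The plan therefore has two steps: (i) use the hitting-set information coming from Theorem~\ref{thm:nakamura-affine} and Theorem~\ref{thm:cg-optim-ib} to show that every maximal quasi-closed set of $C$ equals $\ex(C)$, and (ii) conclude uniqueness.

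For step (i), let $P$ be the unique pseudo-closed set of $C$. In a convex geometry we have $P = \ex(C)$: the set $\ex(C)$ spans $C$ and lies in every spanning set of $C$, hence in every quasi-closed set of $C$. Since $C$ is essential, some quasi-closed set exists, and the minimal one is $P \supseteq \ex(C)$. Because $P$ is non-closed, $C \setminus \ex(C) \neq \emptyset$, and for every $x \in C \setminus P$, Theorem~\ref{thm:nakamura-affine} yields an optimum base containing $P \imp x$. By Theorem~\ref{thm:cg-optim-ib}, the set $\{x\}$ is then a (minimum) hitting set of $\HQC(C)$.

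We need that the premise of that implication is $\ex(C)$. Theorem~\ref{thm:cg-optim-ib} forces optimum premises to be $\ex(C)$, and the base of Theorem~\ref{thm:nakamura-affine} is optimum, so $P = \ex(C)$; this is consistent with the previous step. Now take any maximal quasi-closed set $Q$ of $C$. Its complement $C \setminus Q$ is an edge of $\HQC(C)$, so it contains every $x \in C \setminus P$. Hence $Q \cap (C \setminus P) = \emptyset$, which gives $Q \subseteq P = \ex(C)$. Combined with $\ex(C) \subseteq Q$, we get $Q = \ex(C)$.

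For step (ii), $\HQC(C)$ is nonempty because $C$ is essential, so it has at least one edge. Every maximal quasi-closed set equals $\ex(C)$, so there is exactly one, namely $\ex(C)$. It is quasi-closed as it equals $P$. The main subtlety I expect is in step (i): checking that the elements $x_i$ in Theorem~\ref{thm:nakamura-affine} may be chosen arbitrarily in $\cl(P)\setminus P = C \setminus \ex(C)$. This is exactly what makes every such singleton a hitting set, and without that freedom the argument above would not cover all $x \in C \setminus \ex(C)$.
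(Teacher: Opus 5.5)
Your proof is correct and follows the paper's argument. Theorem~\ref{thm:nakamura-affine} combined with Theorem~\ref{thm:cg-optim-ib} makes each singleton $\{x\}$ with $x \in C \setminus P$ a hitting set of $\HQC(C)$, which forces every edge to equal $C \setminus P$. Your derivation of $P = \ex(C)$ from the optimality of the Kashiwabara--Nakamura base also makes explicit a step the paper leaves implicit.

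Two side claims are false as stated and should be deleted; your argument never uses them. First, $\ex(C)$ being quasi-closed would make it the unique \emph{minimal} quasi-closed set of $C$, not the maximal one. Second, $P = \ex(C)$ does not hold in general convex geometries: for $\ib = \{e \imp \lset{ab},\ \lset{ex} \imp y\}$ and $C = \lset{abexy}$ one has $P = \lset{abex}$ but $\ex(C) = \lset{ex}$. It holds here only because of the affine optimality argument you give next.
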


\begin{proof}
In order to prove the claim, it is sufficient to show that for each essential set $C$ and each corresponding pseudo-closed set $P$, any element $x \in C \setminus P$ is a hitting set of $\HQC(C)$. 
Indeed, since each edge of $\HQC(C)$ is disjoint from $P$, the fact that each singleton $\{x\}$ with $x \in C \setminus P$ is a hitting set of $\HQC(C)$ entails that $Q_C = C \setminus P$ is the unique edge of $\HQC(C)$.
However, by Theorem~\ref{thm:nakamura-affine} every implication of the form $P \imp x$ belongs to some (left-)optimal implicational base of $(\U, \cs)$.
By Theorem~\ref{thm:cg-optim-ib}, we thus derive that $x$ is a hitting set of $\HQC(C)$ as expected, which concludes the proof.
\end{proof}

The unique edge of $\HQC(C)$ is thus $C \setminus \ex(C)$.
It then vacuously holds that each quasi-closed hypergraph of $(\U, \cs)$ has pairwise disjoint edges.
This proves item (3) of Theorem~\ref{thm:cg-classes}.
\iflongelse{Let us mention though that any convex geometry satisfying Theorem~\ref{thm:nakamura-affine} also satisfies Lemma~\ref{lem:affine-qc}, hence that Theorem~\ref{thm:optim-classes} may apply to a broader range of convex geometries in this regard.}{}

\subsection{Acceptant convex geometries} \label{subsec:q-acc-cg}

We finally study ($q$-)acceptant convex geometries.
A convex geometry $(\U, \cs)$ is \emph{$q$-acceptant} for some $1 \leq q \leq \card{\U}$ if for every $C \in \cs$, $\card{\ex(C)} = \min(q, \card{C})$.
Observe that a convex geometry is acceptant for at most one $q$.
Besides, the definition shows that a convex geometry $(\U, \cs)$ is $q$-acceptant if and only if every subset of $\U$ with strictly less than $q$ elements is closed and every closed set with at least $q$ elements has precisely $q$ extreme points.
The trivial closed sets of a $q$-acceptant convex geometry thus are precisely all the closed sets of size at most $q$.
Two $2$-acceptant convex geometries are picture in Figure~\ref{fig:example-2-acceptant}.

\begin{figure}[ht!]
\centering 
\includegraphics[width=0.8\linewidth]{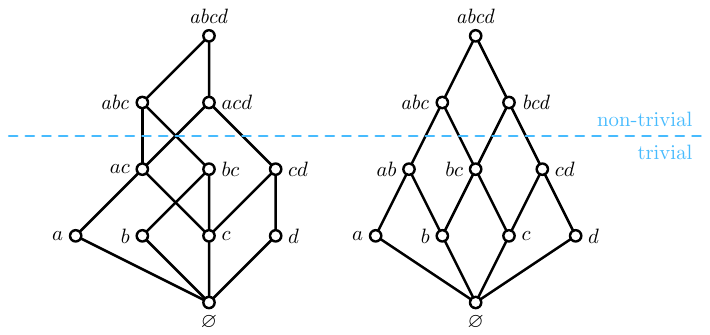}
\caption{Examples of $2$-acceptant convex geometries.
Closed sets with at most $2$ elements are trivial, other ones are non-trivial and all have exactly $2$ extreme points.}
\label{fig:example-2-acceptant}
\end{figure}

We prove that, much as affine convex geometries, an essential set $C$ of a $q$-acceptant convex geometry has a unique maximal quasi-closed set.

\begin{lemma} \label{lem:q-acc-unique-qc}
Any essential set $C$ has a unique maximal quasi-closed set.
\end{lemma}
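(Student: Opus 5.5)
The plan is to show that the union of any two quasi-closed sets $Q_1, Q_2$ of $C$ is again quasi-closed. Since $C$ has finitely many quasi-closed sets, it then has a unique maximal one, namely the union of all of them. Throughout, I use the $q$-acceptant structure: every set with fewer than $q$ elements is closed, and every closed set $D$ with $\card{D}\geq q$ satisfies $\card{\ex(D)}=q$, with $\ex(D)$ its unique minimal spanning set.

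\textbf{Step 1: reduce the quasi-closure test to $q$-subsets.} Every quasi-closed set $Q$ of $C$ contains $\ex(C)$, because it spans $C$ in a convex geometry. Take $A\subseteq Q$ with $\cl(A)\subset C$. If $\card{A}<q$, then $A$ is closed and $\cl(A)=A\subseteq Q$, so such $A$ impose no condition. If $\card{A}\geq q$, then $\ex(\cl(A))\subseteq A$ has exactly $q$ elements and spans $\cl(A)$. Hence, for a set $\ex(C)\subseteq Y\subset C$, $Y$ is quasi-closed if and only if, for every $q$-subset $B\subseteq Y$ with $\cl(B)\neq C$, we have $\cl(B)\subseteq Y$. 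I will also note that $\cl(B)\neq C$ exactly when $B\neq\ex(C)$. Indeed, $\ex(C)$ is the unique minimal spanning set of $C$ and has size $q$, so a $q$-set spanning $C$ must equal $\ex(C)$.

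\textbf{Step 2: union closure.} Let $Q=Q_1\cup Q_2$. First, $Q\neq C$ must be ruled out. I would do this by contradiction: if $Q=C$, pick $x\in C\setminus Q_1$, so $x\in Q_2$. Then try to produce a $q$-set $B\subseteq Q_1$ or $B\subseteq Q_2$, different from $\ex(C)$, whose closure contains an element missing from that $Q_i$. A natural candidate is $B=(\ex(C)\setminus\{e\})\cup\{x\}$, using anti-exchange to control $\cl(B)$.

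Next, take a $q$-set $B\subseteq Q$ with $B\neq\ex(C)$. I argue by induction on $\card{B\setminus Q_1}$. The base case $B\subseteq Q_1$ is immediate from the quasi-closedness of $Q_1$ via Step 1.

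For the inductive step, pick $b\in B\setminus Q_1$, so $b\in Q_2$. I want to replace $B$ by sets $B'$ that are closer to $Q_1$ and together cover $\cl(B)$. The tool is path independence of the extreme-point operator, $\ex(X\cup Y)=\ex(\ex(X)\cup Y)$. Combined with the fixed cardinality $q$ of extreme sets, it lets me write $\cl(B)$ as a union of closures of $q$-sets. Each of these $q$-sets either lies in $Q_2$, and so has closure inside $Q_2$, or has strictly fewer elements outside $Q_1$, and so has closure inside $Q$ by induction.

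\textbf{Main obstacle.} I expect the exchange argument in the inductive step to be the crux. Concretely, for $z\in\cl(B)\setminus B$ I must find a $q$-set $B'$ with $z\in\cl(B')$ and either $B'\subseteq Q_2$ or $\card{B'\setminus Q_1}<\card{B\setminus Q_1}$, while keeping $B'\neq\ex(C)$.

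My first attempt is the following. Consider $\cl(B\cup\ex(C))=C$ and the chain of closed sets obtained by adding elements of $\ex(C)$ one at a time, starting from $\cl(B)$. At each step the extreme set changes by exactly one element, because its size is stuck at $q$. I would use this to swap elements of $B\setminus Q_1$ for elements of $\ex(C)\subseteq Q_1\cap Q_2$.

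If this direct union argument fails, the fallback is to show directly that $\ex(C)$ is the only quasi-closed set of $C$, mirroring Lemma~\ref{lem:affine-qc}. For that, I would exhibit, for any $\ex(C)\subset Y\subset C$, a $q$-set of the form $(\ex(C)\setminus\{e\})\cup\{y\}$ whose closure escapes $Y$.
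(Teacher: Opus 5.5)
There is a genuine gap. Your target is true: the union of two quasi-closed sets of $C$ is quasi-closed, and in fact these sets form a chain. But the proposal does not prove it, because the decisive step is only announced. You claim that path independence together with $\card{\ex(\cdot)}=q$ lets you cover $\cl(B)$ by closures of $q$-sets $B'$ satisfying $B'\subseteq Q_2$ or $\card{B'\setminus Q_1}<\card{B\setminus Q_1}$. No argument is given for this, and you yourself flag it as the unresolved crux.

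The step $Q_1\cup Q_2\neq C$ is also not carried out, and the candidate as stated cannot give the contradiction you describe. For arbitrary $x\in C\setminus Q_1$ we have $B=(\ex(C)\setminus\{e\})\cup\{x\}\subseteq Q_2$ and $B\neq\ex(C)$, so quasi-closedness of $Q_2$ already forces $\cl(B)\subseteq Q_2$. The candidate does work if $x$ is chosen as the unique extreme point of $C\setminus\{e\}$ outside $\ex(C)$. Whichever $Q_i$ contains that point contains $\cl(B)=C\setminus\{e\}$, hence equals $C$.

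Your fallback is false. In Example~\ref{ex:acceptant}, $\HQC(\lset{abcd})=\{a\}$ for $\cs_1$, so $\lset{bcd}$ is quasi-closed although $\card{\ex(\lset{abcd})}=2$. There your proposed witnesses $(\ex(C)\setminus\{e\})\cup\{c\}$ are $\lset{bc}$ and $\lset{cd}$, both of which are closed.

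The missing idea is an extreme-point count on a join, which is the heart of the paper's proof.
\begin{enumerate}
\item Fix $e\in\ex(C)$ and put $D_i:=Q_i\setminus\{e\}=Q_i\cap(C\setminus\{e\})$. Quasi-closedness makes each $D_i$ closed.
\item Both $D_i$ contain the $q-1$ points of $\ex(C)\setminus\{e\}$. These remain extreme in the join $D:=\cl(D_1\cup D_2)\subseteq C$.
\item Since $\card{\ex(D)}\leq q$ and $\ex(D)\subseteq D_1\cup D_2$, some $D_i$ contains all of $\ex(D)$, hence all of $D$.
\item So $D_1$ and $D_2$ are comparable. As $e\in Q_1\cap Q_2$, so are $Q_1$ and $Q_2$.
\end{enumerate}
The paper phrases this by contradiction with two distinct maximal $Q_1,Q_2$. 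Their traces on a predecessor $C\setminus\{x\}$ would give the join two extra extreme points, one from each side, hence $q+1$ in total.

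Thus the quasi-closed sets of $C$ form a chain. This gives uniqueness of the maximal one at once and makes your union-closure argument and the whole exchange induction unnecessary. Your Step~1 criterion is correct but less convenient than an equivalent one: a set $\ex(C)\subseteq Y\subset C$ is quasi-closed if and only if $Y\setminus\{e\}$ is closed for every $e\in\ex(C)$.
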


\begin{proof}
Let $C$ be essential and assume for contradiction it admits at least two distinct inclusion-wise maximal quasi-closed set $Q_1$, $Q_2$.
Observe that $C$ is non-trivial.
Let $C'$ be a predecessor of $C$.
We have $C' = C \setminus \{x\}$ for some $x \in C$.
Such a $C'$ must exist as $C$ is assumed to be non-trivial, hence non-empty.
As $Q_1, Q_2$ are quasi-closed, $C' \cap Q_1$ and $C' \cap Q_2$ are closed.
As $\ex(C) \setminus \{x\} \subseteq C', Q_1, Q_2$, we have that $\ex(C) \setminus \{x\} \subseteq \ex(C'),  \ex(C' \cap Q_1)$, $\ex(C' \cap Q_2)$, that is, $C'$, $C' \cap Q_1$ and $C' \cap Q_2$ share $q-1$ extreme points of $C$ as $(\U, \cs)$ is $q$-acceptant.
Now let $C'' := (C' \cap Q_1) \lor (C' \cap Q_2)$.
Since $C' \cap Q_1 \subseteq C'' \subseteq C'$, we have by previous discussion that $C''$ have at least $q-1$ extreme points given by $\ex(C') \setminus \{x\}$.
Since $C''$ is the supremum of $(C' \cap Q_1) \lor (C' \cap Q_2)$ there must exist a predecessor $C_1$ of $C''$ such that $(C' \cap Q_1) \subseteq C_1$ and $(C' \cap Q_2) \nsubseteq C_1$.
Because $(\U, \cs)$ is a convex geometry, we have $C_1 = C'' \setminus \{x_2\}$ for some $x_2 \in (C' \cap C_2) \setminus (C' \cap C_1)$.
Dually, we derive the existence of some $x_1 \in (C' \cap Q_1) \setminus (C' \cap Q_2)$ such that $C_2 = C'' \setminus \{x_1\}$ is a predecessor of $C''$ that includes $C' \cap Q_2$ but not $C' \cap Q_1$.
As $C_1, C_2$ are predecessors of $C''$, $x_1, x_2 \in \ex(C'')$ holds.
Moreover, $x_1, x_2 \notin \ex(C') \setminus \{x\}$ holds by above discussion.
We deduce that $C''$ has at least $q+1$ extreme points, a contradiction with $(\U, \cs)$ being $q$-acceptant.
This concludes the proof.
\end{proof}

By Lemma~\ref{lem:q-acc-unique-qc}, we have that $\mathcal{Q}(C)$ has, for every essential $C$, a unique edge.
Its minimum hitting sets are therefore singletons, much as affine convex geometries, which finally proves item (4) of Theorem~\ref{thm:cg-classes}.

\begin{example} \label{ex:acceptant}
Consider the two $2$-acceptant convex geometries of Figure~\ref{fig:example-2-acceptant}.
Let us call $(\U, \cs_1)$ the left one and $(\U, \cs_2)$ the right-one.
In both cases, non-trivial closed sets and essential sets coincide.
For $\cs_1$ we have $\HQC(abc) = \HQC(acd) = \{c\}$ and $\HQC(abcd) = \{a\}$.
For $\cs_2$ we have $\HQC(abc) = \{b\}$, $\HQC(bcd) = \{c\}$ and $\HQC(abcd) = \{bc\}$.
We thus observe that $(\U, \cs_1)$ has a unique optimum base while $(\U, \cs_2)$ has two depending on which of $b$ and $c$ we choose as the conclusion associated to $ad$.
\end{example}

\section{Discussion} \label{sec:discussion}

In this paper, we proposed a characterization of the optimum bases of a convex geometry.
This characterization relies on quasi-closed hypergraphs, the hypergraphs defined for each essential set by taking the complement of its inclusion-wise maximal quasi-closed sets.
In the case where each quasi-closed hypergaph of the convex geometry has disjoint edges, we proved that using existing minimization and reduction algorithms was sufficient to produce, in polynomial time, an optimum base out of any other implicational base.
This property of quasi-closed hypergraphs turns out to apply to double-shelling, acyclic, affine and acceptant convex geometries, which allows to unify existing results of the literature.

We conclude the paper with some observations and discussions for future work.
First, we observe that while in a convex geometry the property of having quasi-closed hypergraphs with disjoint edges guarantees efficient optimization, whether this property can be recognized in polynomial time from an implicational base remains unsettled.
This leads to the following question:
given an implicational base $(\U, \ib)$ of some convex geometry $(\U, \cs)$, is it possible to decide in polynomial time whether all quasi-closed hypergraphs of $(\U, \cs)$ have disjoint edges?

Another, more general, intriguing questions consists in using our approach to identify further classes of convex geometries for which optimization is tractable.
Indeed, by Theorem~\ref{thm:cg-optim-ib}, a convex geometry can be optimized efficiently if and only if for each quasi-closed hypergraph, at least one hitting set can be computed in polynomial time.
Thus, classifying convex geometries according to their quasi-closed hypergraphs may be a good yardstick to better understand the hardness of the optimization task in convex geometries.

In particular, we may study whether other classes of convex geometries have quasi-closed hypergraphs with disjoint edges.
Indeed, the next example shows that convex geometries with edge-disjoint quasi-closed hypergraphs properly contain the classes we studied in this paper:
\begin{example} \label{ex:other-hyp-disjoint}
Let $\U = \{a, \dots, e\}$ and $\ib = \{\lset{ac} \imp \lset{b}, \lset{bd} \imp c, \lset{ad} \imp \lset{bc}, e \imp \lset{ab}\}$.
The associated closure system is a convex geometry that does not belong to any of the classes of Theorem~\ref{thm:optim-classes}.
It is not a ($2$-)acceptant, affine or double-shelling convex geometry as $\{e\} \notin \cs$.
It is not acyclic since $b$ and $c$ belongs to minimal generators of one another.
The quasi-closed hypergraphs are $\HQC(\lset{abe}) = \{a, b\}$, $\HQC(\lset{abc}) = \{b\}$, $\HQC(\lset{bcd}) = \{c\}$ and $\HQC(\lset{abcd}) = \{bc\}$ and all have disjoint edges.
\end{example}

Convex geometries with the Carousel property and $D$-geometries are interesting candidates as they respectively generalize affine convex geometries and acyclic convex geometries being covered by Theorem~\ref{thm:optim-classes}.
Moreover, both have been shown to have tractable optimization~\cite{adaricheva2017optimum}.
In particular, the proof of \cite[Theorem 12]{adaricheva2017optimum} on convex geometries with the Carousel property seems to use an argument similar to \cite[Theorem 9]{nakamura2013prime} that we used in Lemma~\ref{lem:affine-qc} for showing that in an affine convex geometry, an essential set has a unique maximal quasi-closed set.
This suggests that the same holds true for convex geometries with the Carousel property.

\bibliographystyle{alpha}
\bibliography{biblio}
\end{document}